\DeclareMathOperator{\pr}{pr}
\def\log{\mathrm{log}\,}
\theoremstyle{plain}
\newtheorem{thm}{Theorem}[section]
\newtheorem{lem}[thm]{{Lemma}}
\newtheorem{pro}[thm]{Proposition}
\newtheorem{conj}[thm]{{Conjecture}}
\newtheorem{ques}[thm]{{Question}}
\newtheorem{defi}[thm]{Definition}
\theoremstyle{remark}
\newtheorem{rmk}[thm]{Remark}
\numberwithin{equation}{section}
\theoremstyle{plain}
\newcommand{\thistheoremname}{}
\newtheorem*{genericthm*}{\thistheoremname}
\newenvironment{namedthm*}[1]{\renewcommand{\thistheoremname}{#1}%
	\begin{genericthm*}}
	{\end{genericthm*}}
\newtheoremstyle{named}{}{}{\itshape}{}{\bfseries}{.}{.5em}{\thmnote{#3's }#1}
\theoremstyle{named}
\newcommand\thankssymb[1]{\textsuperscript{\@fnsymbol{#1}}}
\begin{document} 
	\title[Generating all Ahlfors currents by a single entire curve]{Generating all Ahlfors currents by a single entire curve} 

	\subjclass[2020]{32A22, 32C30, 32Q56}
\keywords{Oka-1 manifolds, Entire curves, Holomorphic discs, Ahlfors currents, Holomorphic approximations}

\author{Yunling Chen}
\address{Academy of Mathematics and Systems Sciences, Chinese Academy of Sciences, Beijing 100190, China}
\email{chenyl25@amss.ac.cn}

\author{John Erik Fornæss}
\address{Department of Mathematics, NTNU, Norway}
\email{fornaess@gmail.com}

\author{Song-Yan Xie}
\address{State Key Laboratory of Mathematical Sciences, Academy of Mathematics and Systems Science, Chinese Academy of Sciences, Beijing 100190, China;  School of Mathematical Sciences, University of Chinese Academy of Sciences, Beijing 100049, China.}
	\email{xiesongyan@amss.ac.cn}
	
	\date{\today}

\begin{abstract}
Let \(X\) be a compact complex manifold possessing the \emph{Runge approximation property on discs}, meaning that every holomorphic map from a  closed disc into \(X\) is approximable by a global holomorphic map from \(\mathbb{C}\). We construct an entire curve \(F : \mathbb{C} \to X\) such that the associated family of concentric holomorphic discs \(\{F|_{\overline{\mathbb{D}}_r}\}_{r>0}\) generates  all Ahlfors currents on \(X\), thereby settling a conjecture of Sibony.
\end{abstract}

\maketitle
\section{\bf Introduction}

The investigation of holomorphic entire curves in complex manifolds forms a classical theme in complex geometry (cf.~\cite{NW14, Ru21}), originating in Nevanlinna’s value distribution theory \cite{Nevanlinna25}. This analytic perspective was later enriched by Ahlfors’ geometric insights through his covering surface theory \cite{Ahlfors35}.

The advancement of these theories is largely driven by their connection to  complex hyperbolicity  problems. In a pivotal contribution, McQuillan \cite{McQuillan98} verified the famous Green–Griffiths  conjecture for complex projective surfaces with $c_1^2 > c_2$ by means of  \emph{Nevanlinna currents}, which capture the asymptotic behavior of entire curves. Later,  Duval \cite{Duval08} employed a simplified variant of Nevanlinna currents, called \emph{Ahlfors currents}, to establish a deep,  quantitative refinement of Brody's lemma, thereby obtaining a characterization of complex hyperbolicity in terms of an isoperimetric inequality for holomorphic discs. For further 
important applications of Ahlfors and Nevanlinna currents, we refer the reader to \cite{DinhSibony18, DuvalHuynh18, DinhVu20, HuynhVu21, Duj22}, among others.

\smallskip
We begin by recalling the basic definitions and notations. Let \( X \) be a compact complex manifold endowed with a Hermitian (1,1)-form \( \omega \). Throughout this paper, \( \mathbb{D}_r \) and \( \mathbb{D}(c, r) \) denote the discs in \( \mathbb{C} \) of radius \( r \), centered at the origin and at \( c \), respectively. By a holomorphic map \( f \colon \overline{\mathbb{D}}_R \to X \), we mean that \( f \) is holomorphic on some open neighborhood \( U \) of the closed disc \( \overline{\mathbb{D}}_R \).

We associate to \( f \) a positive bidimension \((1,1)\)-current \( A_f \), called the {\em Ahlfors-type current}, which
acts on any smooth \((1,1)\)-form \( \eta \) on \( X \) by normalized integration:
\begin{equation}\label{eq:current def}
    A_f(\eta) = \frac{1}{\mathrm{Area}_\omega(f(\mathbb{D}_R))} \int_{\mathbb{D}_R} f^*\eta.
\end{equation}
 The normalization gives \( A_f \) the mass \( A_f(\omega) = 1 \).

From a sequence of nonconstant holomorphic discs \( \{ f_i \colon \overline{\mathbb{D}}_{r_i} \to X \}_{i \geqslant 1} \) which satisfies the length-area condition
\begin{equation}
    \label{eq:length-area-Ahlfors}
\lim_{i \to \infty} \frac{\mathrm{Length}_\omega(f_i(\partial \mathbb{D}_{r_i}))}{\mathrm{Area}_\omega(f_i(\mathbb{D}_{r_i}))} = 0,
\end{equation}
we obtain an associated sequence of Ahlfors-type currents \( \{ A_{f_i} \}_{i \geqslant 1} \) with bounded mass. By the Banach--Alaoglu theorem, 
some subsequence converges weakly to a positive  \((1,1)\)-current \( \mathcal{T} \), which is 
closed due to the  condition~\eqref{eq:length-area-Ahlfors}. Such a limit \( \mathcal{T} \) is called an \emph{Ahlfors current} on $X$.

By Ahlfors' lemma (cf.~\cite[p.~7]{Duval21}), for any entire curve \( f \colon \mathbb{C} \to X \), there exists a sequence of increasing radii \( \{r_i\}_{i \geqslant 1} \) tending to infinity such that the corresponding concentric holomorphic discs \( \{f|_{\overline{\mathbb{D}}_{r_i}}\}_{i \geqslant 1} \) satisfy the length-area condition~\eqref{eq:length-area-Ahlfors}. Consequently, one can obtain an associated Ahlfors current $\mathcal{T}$ which encodes the asymptotic geometric behavior of $f$. Obviously, Ahlfors currents are natural  analogues of the Lelong currents of integration over analytic subvarieties~\cite{Lelong57}.

In the context of complex dynamics, Ahlfors currents associated to certain holomorphic curves have been shown to exhibit uniqueness properties \cite{DinhSibony18, DinhVu20}. These examples motivate the following fundamental question:

\begin{ques}\label{uniqueness-question}\rm
 Are all Ahlfors (resp. \ Nevanlinna) currents associated to the same entire curve cohomologically equivalent?
\end{ques}

Huynh and Xie~\cite{HuynhXie21} first answered the above question in the negative through explicit counterexamples. This fundamental obstruction rules out a naive analytic intersection theory between entire curves and divisors via pairing of the corresponding Ahlfors (resp.\ Nevanlinna) currents with Chern forms.

\medskip
Following his review of an early manuscript~\cite{HuynhXie21}, the late Nessim Sibony (1947--2021) proposed the following striking conjecture: \footnote{\,Sibony (personal communication, 2021) pointed out: ``I suspect that for $\mathbb{P}^n$, or tori, all the Ahlfors currents can be obtained just using one map; see the discussion on the Birkhoff approach~\cite{Birkhoff29} in the enclosed paper~\cite{DinhSibony20}.'' The citations are added by the authors. }
\begin{conj}\label{conj:sibony}
For certain compact complex manifolds $X$, such as $\mathbb{P}^n$ or complex tori, every Ahlfors current on $X$ can be obtained from a single entire curve $f \colon \mathbb{C} \to X$.
\end{conj}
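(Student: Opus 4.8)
The plan is to produce $F$ by a Runge‑type iteration in the spirit of Birkhoff's universal entire function, inserting at each stage a large near‑copy of a holomorphic disc that approximates a prescribed Ahlfors current, while leaving the earlier part essentially untouched. Fix the Hermitian form $\omega$ and a metric $\mathbf{d}$ inducing the weak topology on the (weakly compact, metrizable) set of positive bidimension‑$(1,1)$ currents of mass $\le 1$. The first step is a reduction: every Ahlfors current $\mathcal{T}$ on $X$ is, for each $\epsilon>0$ and each $M>0$, within $\epsilon$ in $\mathbf{d}$ of $A_g$ for some nonconstant holomorphic disc $g\colon\overline{\mathbb{D}}\to X$ with $\mathrm{Length}_\omega(g(\partial\mathbb{D}))/\mathrm{Area}_\omega(g(\mathbb{D}))<\epsilon$ and $\mathrm{Area}_\omega(g(\mathbb{D}))>M$. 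Indeed, writing $\mathcal{T}=\lim_i A_{f_i}$ along a sequence of discs satisfying~\eqref{eq:length-area-Ahlfors}, one passes to a large index $i$ and rescales the domain to $\overline{\mathbb{D}}$ to get the first two properties, and then precomposes with $z\mapsto z^d$ for $d$ large, which multiplies the area by $d$ while preserving both $A_{f_i}$ and the length‑to‑area ratio.

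Since the set $\mathcal{A}$ of Ahlfors currents is separable, fix a sequence $(\mathcal{S}_n)_{n\ge1}$ running through a countable dense subset of $\mathcal{A}$ with each term repeated infinitely often, so that every $\mathcal{T}\in\mathcal{A}$ is the limit of a subsequence of $(\mathcal{S}_n)$. The heart of the argument is an \emph{insertion step}: given an entire curve $F'\colon\mathbb{C}\to X$, a radius $R'>0$, a target $\mathcal{S}\in\mathcal{A}$, and $\epsilon>0$, one produces an entire curve $F''\colon\mathbb{C}\to X$ and a radius $R''>R'$ with $\sup_{\overline{\mathbb{D}}_{R'}}\mathrm{dist}_X(F'',F')<\epsilon$, with $\mathbf{d}(A_{F''|_{\overline{\mathbb{D}}_{R''}}},\mathcal{S})<\epsilon$ and $\mathrm{Length}_\omega(F''(\partial\mathbb{D}_{R''}))/\mathrm{Area}_\omega(F''(\mathbb{D}_{R''}))<\epsilon$, and with $\mathrm{Area}_\omega(F''(\mathbb{D}_{R''}))$ as large as we please. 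To achieve this one uses the reduction to obtain a disc $g$ with $A_g$ close to $\mathcal{S}$, tiny length/area ratio, and area much larger than $\mathrm{Area}_\omega(F'(\mathbb{D}_{R'}))$; one then assembles a single holomorphic disc on a large $\overline{\mathbb{D}}_{R''}$ that coincides with $F'$ on $\overline{\mathbb{D}}_{R'}$, continues across a thin holomorphic bridge of area bounded independently of how large we later need the $g$‑contribution to be, and on the remaining bulk reproduces, via a proper finite map onto $\overline{\mathbb{D}}$, a high‑multiplicity copy of $g$; after approximating this disc in $C^1$ on $\overline{\mathbb{D}}_{R''}$ by a genuine entire curve (the Runge approximation property), the area — and the outer boundary — of $F''|_{\overline{\mathbb{D}}_{R''}}$ is overwhelmingly accounted for by the $g$‑part, so $A_{F''|_{\overline{\mathbb{D}}_{R''}}}$ is close to $A_g$, hence to $\mathcal{S}$, and the length/area ratio is close to that of $g$.

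Granting the insertion step, the theorem follows by iteration: starting from a constant $F_0$ with $R_0=1$, at stage $n$ apply the insertion step with $F'=F_{n-1}$, $R'=R_{n-1}$, $\mathcal{S}=\mathcal{S}_n$, $\epsilon=\epsilon_n$ (where $\sum_n\epsilon_n<\infty$), demanding moreover that the new area exceed $2^n$ times all previously accumulated area, and set $R_n=R''$. Because $R_n\uparrow\infty$ and $\sum_n\epsilon_n<\infty$, the $F_n$ converge locally uniformly — hence, by Cauchy estimates in local charts, in $C^1_{\mathrm{loc}}$ — to an entire curve $F\colon\mathbb{C}\to X$; shrinking the approximation errors makes $\|F-F_n\|_{C^1(\overline{\mathbb{D}}_{R_n})}$ as small as desired, so, since the area functional and the Ahlfors‑type current depend continuously on the $C^1$‑jet (with the area uniformly bounded below by our choices), $A_{F|_{\overline{\mathbb{D}}_{R_n}}}$ stays within $2\epsilon_n$ of $\mathcal{S}_n$ and $\mathrm{Length}_\omega(F(\partial\mathbb{D}_{R_n}))/\mathrm{Area}_\omega(F(\mathbb{D}_{R_n}))\to0$. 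Finally, for any Ahlfors current $\mathcal{T}$, choosing indices $n_j$ with $\mathcal{S}_{n_j}\to\mathcal{T}$ gives concentric discs $\{F|_{\overline{\mathbb{D}}_{R_{n_j}}}\}_j$ satisfying~\eqref{eq:length-area-Ahlfors} with $A_{F|_{\overline{\mathbb{D}}_{R_{n_j}}}}\to\mathcal{T}$, so $F$ generates $\mathcal{T}$. Applied to $X=\mathbb{P}^n$ or a complex torus — which possess the Runge approximation property on discs — this settles Conjecture~\ref{conj:sibony}.

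The main obstacle is the insertion step, and within it the holomorphic gluing: fabricating a single holomorphic disc, globalizable to an entire curve using only the approximation property for discs, that reproduces the fixed curve $F'$ on a sub‑disc, a prescribed high‑area near‑copy of $g$ on the bulk, and a controlled small‑area transition in between. The three pieces are a priori incompatible as one holomorphic map, so the gluing must be performed approximately and flexibly — e.g. via the thin holomorphic bridge joining the boundary of $F'$ to the $g$‑blob, together with the requisite strengthening of the Runge property to the Runge compacts involved — while keeping the transition area bounded independently of the eventual size of the $g$‑part, so that the area and length bookkeeping pins down the Ahlfors current and the length–area ratio of $F''|_{\overline{\mathbb{D}}_{R''}}$ to the prescribed values.
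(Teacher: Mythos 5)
Your high-level strategy is the paper's: fix a countable dense family of Ahlfors currents (or rather a countable family of discs generating them, each appearing infinitely often), amplify the area of each target disc $g$ by precomposing with a high-degree power map (the paper uses $(z-c)^k$, you use $z\mapsto z^d$, same effect: both $A_g$ and the length/area ratio are invariant since $\frac{kM}{kN}=\frac{M}{N}$), bridge the old curve to the new disc with a short path of area $O(1)$ so that as $k\to\infty$ the Ahlfors-type current of the glued object is dominated by the $g$-part (the identity $\lim_k\frac{kM+O(1)}{kN+O(1)}=\frac{M}{N}$), and iterate with summable errors $\epsilon_n$ so the limit $F$ exists and inherits the estimates. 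You also correctly flag that the one nontrivial point is turning the glued object into a holomorphic map from a \emph{round disc} back to $X$, globalizable by the Runge approximation property.

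That flagged point, however, is exactly where your argument stops rather than concludes, and it is where the paper's real technical content sits. Having assembled a holomorphic map $h$ on a dumbbell domain $H$ (big disc $\mathbb{D}_{R_{n+1}}$ $\cup$ thin neck $\cup$ small disc carrying the high-multiplicity copy of $g$), one cannot simply ``view it as a map on $\overline{\mathbb{D}}_{R''}$'': $H$ is not a disc, and your ``assembles a single holomorphic disc on a large $\overline{\mathbb{D}}_{R''}$ that coincides with $F'$ on $\overline{\mathbb{D}}_{R'}$'' has no mechanism behind it (and ``coincides'' cannot be achieved — only approximation is possible, which the paper controls via a preliminary negligible-error calibration on a relatively compact $U_n'\Supset\overline{\mathbb{D}}_{R_n}$). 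The paper's resolution (their ``Trick 4'') is to precompose $h$ with the normalized Riemann map $\phi\colon\mathbb{D}_{R_{n+1}}\to H$ with $\phi(0)=0$, $\phi'(0)>0$, and to invoke Carath\'eodory kernel convergence (their Lemma~\ref{lem:dumbbell domain convergence}): as the neck width tends to $0$, $\phi$ converges to the identity locally uniformly on $\mathbb{D}_{R_{n+1}}$, so $h\circ\phi$ still $C^0$-approximates $F_n$ on $\overline{\mathbb{D}}_{R_n}$ while its boundary values sweep through the $g$-blob. A final slight scaling $z\mapsto(1-\delta)z$ then makes the result holomorphic past $\overline{\mathbb{D}}_{R_{n+1}}$, preserving the estimates, so the induction can continue. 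You also omit the preliminary extension step (doubling $R_n$ to $R_{n+1}$ via the local Runge approximation property on discs) which the paper needs before any bridging can occur. Without the Riemann-map/kernel-convergence mechanism, the insertion step you call ``the main obstacle'' is genuinely unproved, so as written the proposal has a real gap even though the surrounding architecture is correct.
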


In the same private correspondence, Sibony pointed out a connection between this conjecture and Problem~9.1 in his joint open problem list with Dinh~\cite{DinhSibony20}, concerning  universal entire functions in the sense of Birkhoff~\cite{Birkhoff29}. 
Recently, Dinh-Sibony's Problem 9.1 has spurred a series of recent developments~\cite{ChenHuynhXie23, GuoXie24, GuoXie25}.

\medskip
Inspired by Sibony's hint on Birkhoff’s method~\cite{Birkhoff29}, Xie~\cite{Xie24} introduced the following concept.

\begin{defi}[{\cite[Definition 1.7]{Xie24}}]\label{def:xwoka1}\rm 
A connected complex manifold $X$ equipped with a complete distance function $\mathrm{d}_X$ (whose existence is guaranteed by \cite{NomizuOzeki1961}) is said to have the \emph{weak Oka-1 property} if the following holds: for any two disjoint closed discs $D_1, D_2$ contained in a larger closed disc $D \subset \mathbb{C}$, any holomorphic map $F \colon U \to X$ defined on a neighborhood $U$ of $D_1 \cup D_2$, and any error bound $\mathsf{e} > 0$, there exists a holomorphic map $\hat{F} \colon \hat{U} \to X$ on a neighborhood $\hat{U}$ of $D$ such that
\[
\mathrm{d}_X\big(\hat{F}(z), F(z)\big) 
\leqslant \mathsf{e}, \quad \forall z \in D_1 \cup D_2.
\]
\end{defi}

This class of manifolds includes all Oka manifolds~\cite{Forstneric17} (notably $\mathbb{P}^n$ and complex tori) and embraces the broader class of Oka-1 manifolds (Definition~\ref{def:Oka1}) introduced by Alarc\'on and Forstneri\v{c}~\cite{AlarconForstneric23}, which encompasses examples such as Kummer surfaces and  elliptic $K3$ surfaces. Further examples  can be found in~\cite{BW25} (rationally simply connected projective manifolds) as well as in~\cite{FL25}.

\medskip
For such manifolds, Xie obtained the following partial answer to Sibony's Conjecture~\ref{conj:sibony}:

\begin{thm}[{\cite[Theorem A]{Xie24}}]
\label{thm:Xie24}
Let $X$ be a compact complex manifold satisfying the weak Oka-1 property. Then there exists an entire curve $f \colon \mathbb{C} \to X$ whose associated holomorphic discs $\{f|_{\overline{\mathbb{D}}(a,r)}\}_{a\in\mathbb{C},\, r>0}$ generate all Nevanlinna and Ahlfors currents on $X$.
\end{thm}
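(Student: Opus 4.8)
The plan is to build a single ``universal'' entire curve by a diagonal process, repeatedly grafting isolated copies of model discs into ever larger domains by means of the weak Oka-1 property (Definition~\ref{def:xwoka1}). Fix first a metric $\mathbf d$ metrizing the weak-$*$ topology on positive bidimension $(1,1)$-currents of mass at most $1$. The set of Ahlfors currents on $X$ is separable, so fix in it a dense sequence $(\mathcal T_k)$ and write each $\mathcal T_k=\lim_i A_{h^{(k)}_i}$, where the $h^{(k)}_i$ are nonconstant holomorphic discs whose length--area ratios \eqref{eq:length-area-Ahlfors} tend to $0$. Likewise the space $\mathcal O(\mathbb C,X)$ of entire curves is separable, so fix in it a dense sequence $(\phi_k)$; for nonconstant $\phi$ and large $R$ the normalized characteristic $T_{\phi,R}(\eta):=T_\phi(R)^{-1}\!\int_{\mathbb D_R}\log\tfrac R{|z|}\,\phi^*\eta$, with $T_\phi(R)=\int_{\mathbb D_R}\log\tfrac R{|z|}\,\phi^*\omega$, depends continuously on $(\phi,R)$ in the compact--open topology, so every Nevanlinna current on $X$ is a weak limit of currents $T_{\phi_k,R}$ with $R\in\mathbb N$. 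Enumerate all the discs $h^{(k)}_i$ together with all the restrictions $\phi_k|_{\overline{\mathbb D}_R}$, $R\in\mathbb N$, into one sequence $(G_n)_{n\ge1}$ of nonconstant holomorphic discs, $G_n$ defined on a neighborhood of $\overline{\mathbb D}_{r_n}$. A standard diagonal extraction then yields: for every Ahlfors current $\mathcal T$ there is a strictly increasing $(n_j)$ with $A_{G_{n_j}}\to\mathcal T$ and $\mathrm{Length}_\omega(G_{n_j}(\partial\mathbb D_{r_{n_j}}))/\mathrm{Area}_\omega(G_{n_j}(\mathbb D_{r_{n_j}}))\to0$; and for every Nevanlinna current $\mathcal N$ there is a strictly increasing $(n_j)$ with $G_{n_j}=\phi_{k_j}|_{\overline{\mathbb D}_{R_j}}$, $R_j\to\infty$, and $T_{\phi_{k_j},R_j}\to\mathcal N$.

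\emph{The inductive grafting.} Build $f$ as a locally uniform limit $f=\lim_n f_n$ of holomorphic maps $f_n$ defined on neighborhoods of closed discs $\overline{\mathbb D}_{\mathsf R_n}$ with $\mathsf R_0<\mathsf R_1<\cdots$. Given $f_{n-1}$, choose $\mathsf R_n$ so large and a center $a_n$ so that $D_2:=\overline{\mathbb D}(a_n,r_n)$ is disjoint from $D_1:=\overline{\mathbb D}_{\mathsf R_{n-1}}$ and contained in $D:=\overline{\mathbb D}_{\mathsf R_n}$. Apply the weak Oka-1 property to the holomorphic map which is $f_{n-1}$ near $D_1$ and $z\mapsto G_n(z-a_n)$ near $D_2$: this produces $f_n$ holomorphic on a neighborhood of $D$ with $\mathrm d_X(f_n,f_{n-1})<\mathsf e_n$ on $D_1$ and $\mathrm d_X\!\bigl(f_n(z),G_n(z-a_n)\bigr)<\mathsf e_n$ on $D_2$. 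Taking $\mathsf e_n\downarrow0$ fast enough (relative to all data from stages $<n$) makes $f=\lim_n f_n$ well-defined and entire; moreover every later stage $m>n$ only re-approximates $f_{m-1}$ on $\overline{\mathbb D}_{\mathsf R_{m-1}}\supseteq\overline{\mathbb D}_{\mathsf R_n}\supseteq D_2$ with the preassigned tiny error, so the $n$-th graft is never disturbed and $\mathrm d_X\!\bigl(f(z),G_n(z-a_n)\bigr)<2\mathsf e_n$ on $D_2$ for every $n$. Since $\mathsf e_n\to0$, uniform convergence of holomorphic maps on $D_2$ upgrades to $C^\infty$ on a slightly smaller disc, so all areas, lengths and pulled-back forms attached to $f|_{\overline{\mathbb D}(a_n,\cdot)}$ are $\mathsf e_n$-close to those of $G_n$.

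\emph{Verification.} For an Ahlfors current $\mathcal T$ take $(n_j)$ as above; the discs $f|_{\overline{\mathbb D}(a_{n_j},r_{n_j})}$ are $C^1$-close to $G_{n_j}(\cdot-a_{n_j})$, hence they satisfy \eqref{eq:length-area-Ahlfors} and $A_{f|_{\overline{\mathbb D}(a_{n_j},r_{n_j})}}\to\mathcal T$, so the discs $\{f|_{\overline{\mathbb D}(a,r)}\}$ generate $\mathcal T$. For a Nevanlinna current $\mathcal N$ take $(n_j)$ with $G_{n_j}=\phi_{k_j}|_{\overline{\mathbb D}_{R_j}}$ and $T_{\phi_{k_j},R_j}\to\mathcal N$; since $f$ is $C^1$-close to $\phi_{k_j}(\cdot-a_{n_j})$ on $\overline{\mathbb D}(a_{n_j},R_j)$, the normalized characteristic of $f$ at center $a_{n_j}$ and radius $R_j$ — which is assembled from the discs $\{f|_{\overline{\mathbb D}(a_{n_j},t)}\}_{0<t\le R_j}$ via $\int_{\mathbb D(a,R)}\log\tfrac R{|z-a|}f^*\eta=\int_0^R\tfrac{dt}{t}\int_{\mathbb D(a,t)}f^*\eta$ — is close to $T_{\phi_{k_j},R_j}$, hence tends to $\mathcal N$. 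Thus $\{f|_{\overline{\mathbb D}(a,r)}\}_{a,r}$ generates all Ahlfors and all Nevanlinna currents on $X$.

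\emph{Main obstacle.} The genuine work is concentrated in the first step: every Ahlfors current must be approximated, to within any prescribed error, by the Ahlfors-type current of a \emph{single} disc of small length--area ratio (so that a faithfully grafted copy automatically satisfies \eqref{eq:length-area-Ahlfors}), and the Nevanlinna currents must be handled on the same footing, which is what forces the model list to also contain large-radius restrictions $\phi_k|_{\overline{\mathbb D}_R}$ of a countable dense family of entire curves and to invoke separability of $\mathcal O(\mathbb C,X)$ together with continuity of the normalized characteristic in the compact--open topology. Note that the lack of control of the merged map on $D\setminus(D_1\cup D_2)$ in the weak Oka-1 property is harmless here, precisely because every current we wish to reproduce is read off inside the controlled disc $D_2$. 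Once these preparations are in place, the grafting step is just the weak Oka-1 property applied repeatedly, the only routine care being the fast decay of the approximation errors so that earlier grafts persist in the limit; I expect no obstacle beyond organizing these pieces, the conceptual content residing in the hypothesis, i.e.\ in the isolation of the weak Oka-1 class.
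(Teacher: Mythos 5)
Your proposal implements exactly the Birkhoff-style patching argument that the paper attributes to~\cite{Xie24} and summarizes right after stating the theorem: graft a countable family of model discs $G_n$ at far-apart centers $a_n$ via the weak Oka-1 property, with errors $\mathsf e_n$ decaying fast enough that earlier grafts persist in the limit, then read off the Ahlfors and Nevanlinna limits from the isolated grafted copies at $a_n$. Note that the present paper does not reprove Theorem~\ref{thm:Xie24} (it cites~\cite{Xie24}); the relevant contrast it draws is that your freedom to place $\overline{\mathbb{D}}(a_n,r_n)$ disjoint from $\overline{\mathbb{D}}_{\mathsf R_{n-1}}$ is precisely what the paper's own Main Theorem cannot afford, since it is restricted to concentric discs. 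One wording caveat: $\mathsf e_n$ must be chosen small relative to the geometric data of $G_n$ itself (its length--area ratio and its pullbacks of a countable dense family of test $(1,1)$-forms), not merely relative to ``all data from stages $<n$,'' so that the grafted copy reproduces $A_{G_n}$ to within a prescribed $2^{-n}$; this is the analogue of ``Trick 1'' in Section~\ref{section:Ahlfors Currents}.
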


The proof employs Birkhoff's method to ``patch'' together infinitely many suitably chosen holomorphic discs $\{f_i \colon \overline{\mathbb{D}}_{\rho_i} \to X\}_{i \geqslant 1}$ (see Lemma~\ref{lem-countable-discs}). However, this strategy fails when one is restricted to using concentric discs from a single entire curve $f \colon \mathbb{C} \to X$. The core challenge is that, for any two radii $r_1 < r_2$, the behavior of $f$ on $\overline{\mathbb{D}}_{r_1}$ inherently influences its behavior on $\overline{\mathbb{D}}_{r_2}$. This ``pollution'' of information makes it unclear how to ``amalgamate'' the collection $\{f_i\}_{i\geqslant 1}$ into a family of concentric discs from the same entire curve.

\smallskip

Nevertheless, in a recent work~\cite{WuXie25}, Wu and Xie were able to verify Sibony's Conjecture~\ref{conj:sibony} for complex tori at a cohomological level, by means of Hodge theory and a  variant of the ``ping-pong'' strategy in~\cite{Xie24}.

\begin{thm}[{\cite[Theorem A]{WuXie25}}]\label{thm:wu-xie}
For any complex torus $\mathbb{T}$, there exists an entire curve $f \colon \mathbb{C} \to \mathbb{T}$ such that the  concentric holomorphic discs
$\{f|_{{\overline{\mathbb{D}}}_R}\}_{R > 0}
$
generate all Nevanlinna and Ahlfors currents on $\mathbb{T}$, up to cohomological equivalence.
\end{thm}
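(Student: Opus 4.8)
The plan is to use Hodge theory to convert the cohomological assertion into a growth problem for a single entire map $\mathbb{C}\to\mathbb{C}^{n}$, and then to solve that problem by an explicit lacunary power series whose monomial ``blocks'' act as the ping-pong players. Write $\mathbb{T}=\mathbb{C}^{n}/\Lambda$ with a translation-invariant Kähler form $\omega$. Averaging a closed positive $(1,1)$-current over the translations of $\mathbb{T}$ produces a cohomologous translation-invariant one, i.e.\ a constant-coefficient positive semidefinite Hermitian form; hence the cohomology classes of mass-one closed positive $(1,1)$-currents on $\mathbb{T}$ form a compact convex set $\mathcal{K}$, which the pairing with $[\omega]$ identifies with the compact convex set of positive semidefinite Hermitian $n\times n$ matrices subject to one linear ``mass'' normalization. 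Every Ahlfors and every Nevanlinna current on $\mathbb{T}$ has class in $\mathcal{K}$, and since translation-invariant forms represent all of $H^{1,1}(\mathbb{T},\mathbb{R})$, the class of a \emph{closed} positive current is determined by its values on such forms. Now an entire curve $f\colon\mathbb{C}\to\mathbb{T}$ is $\pi\circ\widetilde f$ for an arbitrary entire lift $\widetilde f\colon\mathbb{C}\to\mathbb{C}^{n}$, and writing $g=(g_{1},\dots,g_{n})$ for its derivative one has $f^{*}\eta_{H}=\bigl(\sum_{p,q}h_{p\bar q}\,g_{p}\overline{g_{q}}\bigr)\,dA$ for the translation-invariant form $\eta_{H}$ attached to a Hermitian matrix $H=(h_{p\bar q})$. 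Thus $A_{f|_{\overline{\mathbb{D}}_R}}$ corresponds, under the identification above, to the normalization of the positive semidefinite matrix $M_{R}(g)$ with entries $\int_{\mathbb{D}_R}g_{p}\overline{g_{q}}\,dA$, and the Nevanlinna current at level $R$ to the normalization of $N_{R}(g)=\int_{1}^{R}M_{t}(g)\,\tfrac{dt}{t}$; moreover $\mathrm{Area}_\omega(f(\mathbb{D}_R))=\mathrm{tr}\,M_{R}(g)$ up to a fixed constant, while Cauchy--Schwarz bounds $\mathrm{Length}_\omega(f(\partial\mathbb{D}_R))$ and the Stokes boundary integrals governing Nevanlinna closedness by $R\,\bigl(\int_{0}^{2\pi}|g(Re^{i\theta})|^{2}\,d\theta\bigr)^{1/2}$ and $\int_{1}^{R}\bigl(\int_{0}^{2\pi}|g(te^{i\theta})|^{2}\,d\theta\bigr)^{1/2}\,dt$ respectively. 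So it suffices to produce one $g$ for which the normalized $M_{R}(g)$ (resp.\ $N_{R}(g)$), along radii where the corresponding closedness estimate tends to $0$, accumulate on all of $\mathcal{K}$.

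Here is the construction. Fix a countable dense set $\{c_{k}\}\subset\mathcal{K}$ and spectrally decompose $c_{k}=\sum_{i=1}^{n}\Pi_{u_{k,i}}$, where $\Pi_{v}$ denotes the rank-one form with entries $v_{p}\overline{v_{q}}$. Take $g(z)=\sum_{k\ge1}\sum_{i=1}^{n}a_{k,i}\,z^{D_{k}+i-1}\,u_{k,i}$, a single power series split into blocks with consecutive exponents $D_{k},\dots,D_{k}+n-1$. Since $\int_{\mathbb{D}_R}z^{d}\overline{z^{d'}}\,dA=0$ for $d\ne d'$, all cross terms disappear, so $M_{R}(g)=\sum_{k,i}w_{k,i}(R)\,\Pi_{u_{k,i}}$ is an honest nonnegative combination with weights $w_{k,i}(R)=\tfrac{\pi}{D_{k}+i}\,|a_{k,i}|^{2}R^{2(D_{k}+i)}$ increasing in $R$, and $N_{R}(g)$ is the analogous combination with weights $w_{k,i}(R)/\bigl(2(D_{k}+i)\bigr)$. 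Choosing the exponent schedule $D_{k}\nearrow\infty$ to grow fast enough between blocks, and then the marked radii $R_{k}^{\star}\to\infty$ and a ``weight scale'' $W_{k}$ by a routine fast-growth induction, one arranges simultaneously: (i) $w_{k,i}(R_{k}^{\star})=W_{k}$ for every $i$ (this pins down $|a_{k,i}|$); (ii) the total weight at $R_{k}^{\star}$ of all blocks other than the $k$-th is $<W_{k}/k$ — possible because the earlier blocks contribute a fixed number once their parameters are chosen, and the later $R_{l}^{\star}$ are taken huge; (iii) $W_{k}$ dwarfs a fixed polynomial in $D_{k}$ depending only on $n$ and the least nonzero eigenvalue of $c_{k}$. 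By (i)--(ii), $M_{R_{k}^{\star}}(g)=W_{k}c_{k}+O(W_{k}/k)$, so its normalization lies within $O(1/k)$ of $c_{k}$; because the within-block exponents agree up to a relative error $O(n/D_{k})$, the factors $1/(2(D_{k}+i))$ are essentially constant over a block, so $N_{R_{k}^{\star}}(g)$ also normalizes to within $O(1/k)+O(n/D_{k})$ of $c_{k}$. Finally, (iii) is exactly what makes the two Cauchy--Schwarz bounds above yield length-area ratio $<1/k$ and Nevanlinna-closedness defect $<1/k$ at $R_{k}^{\star}$.

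To conclude: given any Ahlfors current $\mathcal{S}$ on $\mathbb{T}$, its class lies in $\mathcal{K}$, so one may pick $c_{k_{j}}\to[\mathcal{S}]$; along $R_{k_{j}}^{\star}\to\infty$ the length-area condition~\eqref{eq:length-area-Ahlfors} holds and the normalized matrices converge to the form representing $[\mathcal{S}]$. By Banach--Alaoglu (and compactness of $X$, which prevents loss of mass) a subsequence of $A_{f|_{\overline{\mathbb{D}}_{R_{k_{j}}^{\star}}}}$ converges weakly to a closed positive current of mass one, which by the first paragraph is an Ahlfors current cohomologous to $\mathcal{S}$. The Nevanlinna case is identical.

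The main obstacle is the bookkeeping hidden in the fast-growth induction: one must produce a \emph{single} power series whose block energies $w_{k,i}(R)$ are, at the prescribed radii $R_{k}^{\star}$, simultaneously negligible for all earlier blocks (whose contribution keeps growing with $R$), not yet active for all later blocks, and distributed across block $k$ in exactly the eigenvalue proportions of $c_{k}$, while each $R_{k}^{\star}$ still lies where the closedness estimates are favorable and $R_{k}^{\star}\to\infty$; the apparent circularity between the lower bounds forced on $W_{k}$ and on $R_{k}^{\star}$ is what makes the large exponent gaps $D_{k+1}/D_{k}$ indispensable. A lesser but genuine point is the Hodge-theoretic reduction itself — the assertion that the class of the limiting current is captured by the single matrix $\lim M_{r_{i}}(g)$ (resp.\ $\lim N_{r_{i}}(g)$) — which relies on translation-invariant forms spanning $H^{1,1}(\mathbb{T},\mathbb{R})$ and on weak-$*$ continuity of $\mathcal{T}\mapsto\mathcal{T}(\eta_{H})$.
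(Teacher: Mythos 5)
The paper itself does not prove this theorem; it quotes it from~\cite{WuXie25} and only describes the method there as ``Hodge theory and a variant of the ping-pong strategy in~\cite{Xie24}.'' Your proposal shares the Hodge-theoretic reduction — averaging over translations, identifying classes of positive closed $(1,1)$-currents with normalized positive semidefinite Hermitian matrices, and noting that translation-invariant forms span $H^{1,1}(\mathbb{T},\mathbb{R})$ so the class of any closed weak limit is determined by $\lim M_{r_i}(g)$. Where you depart is in the construction of the curve: instead of the gluing-and-Riemann-mapping ``ping-pong'' that this paper (and, apparently, \cite{WuXie25}) run through dumbbell domains, you exploit the linear structure of the torus to write the derivative $g=\widetilde f'$ as a single lacunary power series with disjoint exponent blocks, so that $L^2$-orthogonality of monomials on discs makes each $M_R(g)$, resp.\ $N_R(g)$, an honest nonnegative combination of the prescribed rank-one pieces — the ``ping-pong'' becomes the handoff between blocks, driven by a fast-growth schedule rather than by Mergelyan/Runge approximation. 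For the torus this is cleaner and entirely explicit; the price is that it uses the lift $\mathbb{C}\to\mathbb{C}^n$ and does not port to the general weak Oka-1 setting of the paper's Main Theorem, where there is no such linearization and the gluing machinery is indispensable.

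Two bookkeeping points in your sketch deserve tightening. First, your condition (ii), requiring the non-$k$-th blocks to contribute $<W_k/k$ at $R_k^\star$, is calibrated to the Ahlfors normalization $\operatorname{tr}M_{R_k^\star}\approx W_k$; but the Nevanlinna normalization $\operatorname{tr}N_{R_k^\star}\approx W_k/(2D_k)$ is smaller by a factor $D_k$, and the earlier blocks' $N$-weights $w_{l,i}(R_k^\star)/(2(D_l+i))$ are only damped by the fixed factor $1/(2D_l)$, not by $1/(2D_k)$. So (ii) must be strengthened to $<W_k/(kD_k)$ (still achievable by choosing $W_k$ after $R_k^\star$, against a polynomial bound of degree $\sim 2D_{k-1}$, and then later $R_l^\star$ to beat it down for $l>k$); as stated, (ii) does not suffice for the Nevanlinna half. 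Second, the ``least nonzero eigenvalue of $c_k$'' in (iii) plays no role: with $w_{k,i}(R_k^\star)=W_k$ for every $i$, the trace at $R_k^\star$ is exactly $W_k\operatorname{tr}(c_k)$, and the Cauchy--Schwarz length bound only needs $W_k/D_k\to\infty$; you should also note that for zero eigenvalues the corresponding $u_{k,i}=0$, so those monomials simply drop out and the condition ``$w_{k,i}(R_k^\star)=W_k$'' should be read as pinning $|a_{k,i}|$ only where $u_{k,i}\neq 0$. With these corrections the argument goes through.
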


\smallskip
Looking back, Sibony's Conjecture~\ref{conj:sibony} anticipates the remarkable flexibility of entire curves in Oka geometry (cf.~\cite{Forstneric17}). In this work, we establish this conjecture in full generality for weak Oka-1 manifolds.

\smallskip
We begin by providing two alternative characterizations of weak Oka-1 manifolds.

\begin{defi}\label{def:vwoka1}\rm
A  connected complex manifold $X$ with a complete distance function $\mathrm{d}_X$ has the {\em local Runge approximation property on discs} if for  any holomorphic map $f_1 \colon U \to X$ defined on a neighborhood $U$ of $\overline{\mathbb{D}}$, and any $\epsilon > 0$, there exists a holomorphic map ${f}_2 \colon V \to X$ defined on a neighborhood $V$ of $\overline{\mathbb{D}}_2$ satisfying $\sup_{z \in \overline{\mathbb{D}}} \mathrm{d}_X({f}_1(z), f_2(z)) 
\leqslant\epsilon$.
\end{defi}

\begin{defi}\label{def:awoka1}\rm
A  connected complex manifold $X$  
is said to have the {\em Runge approximation property on discs}, if any holomorphic map from a neighborhood of a closed disc $D\subset \mathbb{C}$ to $X$ can be approximated uniformly on $D$ by entire curves from $\mathbb{C}$ to $X$.
\end{defi}

The above two properties are actually equivalent to the weak Oka-1 property (Definition~\ref{def:xwoka1}), as established in Proposition~\ref{equivalence of definitions}. This class of complex manifolds can be regarded as the antithesis of complex hyperbolic manifolds.

\begin{thm}[Main Theorem]\label{thm:main}
For every compact complex manifold $X$ satisfying the Runge approximation property on discs, there exists an entire curve $F \colon \mathbb{C} \to X$ such that the  concentric holomorphic discs $\{F|_{\overline{\mathbb{D}}_r}\}_{r>0}$ generate all Ahlfors currents on $X$.
\end{thm}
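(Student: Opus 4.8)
The plan is to reduce the theorem, via a diagonal exhaustion, to a single \emph{amalgamation step} and then iterate it. The set $\mathcal{A}$ of all Ahlfors currents on $X$ sits inside the compact metrizable space of closed positive currents of bidimension $(1,1)$ and mass $\leqslant 1$, hence is separable; fix a countable dense subset $\{\mathcal{T}_k\}_{k\geqslant 1}\subset\mathcal{A}$ and, for each $k$, holomorphic discs $g_{k,m}\colon\overline{\mathbb{D}}_{\rho_{k,m}}\to X$ realizing $\mathcal{T}_k$, i.e.\ with $\mathrm{Length}_\omega(g_{k,m}(\partial\mathbb{D}_{\rho_{k,m}}))/\mathrm{Area}_\omega(g_{k,m}(\mathbb{D}_{\rho_{k,m}}))\to 0$ and $A_{g_{k,m}}\to\mathcal{T}_k$. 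Let $(k_n)_{n\geqslant 1}$ enumerate $\mathbb{N}$ with each value repeated infinitely often, and $\epsilon_n\downarrow 0$. I would build inductively entire curves $F_n$ and radii $R_1<R_2<\cdots\to\infty$ with $\sup_{\overline{\mathbb{D}}_{R_{n-1}}}\mathrm{d}_X(F_n,F_{n-1})<\epsilon_n$, with $A_{F_n|_{\overline{\mathbb{D}}_{R_n}}}$ within $\epsilon_n$ of $\mathcal{T}_{k_n}$, and with $\mathrm{Length}_\omega(F_n(\partial\mathbb{D}_{R_n}))<\epsilon_n\,\mathrm{Area}_\omega(F_n(\mathbb{D}_{R_n}))$; shrinking the tail $(\epsilon_m)_{m>n}$ suitably at each stage (using Cauchy estimates and the continuity of $A_{(\cdot)}$ and of the length--area ratio under $C^1$-small perturbations), the limit $F=\lim_n F_n$ is a holomorphic entire curve for which, for each fixed $k$, the radii $\{R_n:k_n=k\}$ form a sequence tending to $\infty$ along which $F$ obeys the length--area condition and $A_{F|_{\overline{\mathbb{D}}_{R_n}}}\to\mathcal{T}_k$. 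A further diagonalization over the dense set $\{\mathcal{T}_k\}$ then produces, for an arbitrary Ahlfors current, a length--area sequence of radii along which the concentric discs of $F$ converge to it, so $\{F|_{\overline{\mathbb{D}}_r}\}_{r>0}$ generates all Ahlfors currents.

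The inductive step is the following \emph{amalgamation lemma}: given an entire curve $F$, a radius $R$, a holomorphic disc $g\colon\overline{\mathbb{D}}_\rho\to X$ of small length--area ratio, and $\epsilon>0$, there are an entire curve $F'$ and a radius $R'>R$ with $\sup_{\overline{\mathbb{D}}_R}\mathrm{d}_X(F',F)<\epsilon$, with $A_{F'|_{\overline{\mathbb{D}}_{R'}}}$ within $\epsilon$ of $A_g$, and with $\mathrm{Length}_\omega(F'(\partial\mathbb{D}_{R'}))<\epsilon\,\mathrm{Area}_\omega(F'(\mathbb{D}_{R'}))$. The idea is to overwhelm the ``pollution'' by the inner disc $\overline{\mathbb{D}}_R$ with an enormous copy of $g$. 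For each integer $d\geqslant 2$ there is a connected annulus $A_d$ and a proper holomorphic branched cover $\pi_d\colon A_d\to\mathbb{D}$ of degree $d$ whose two boundary circles wind $1$ and $d-1$ times over $\partial\mathbb{D}$: realize it by the Riemann existence theorem from $d$ simple branch points chosen so that the monodromy around $\partial\mathbb{D}$ is a $(d{-}1)$-cycle generating a transitive subgroup of $S_d$ (e.g.\ monodromies $(1\,2),(1\,2),(2\,3),(3\,4),\dots,(d{-}1\,d)$). Then $\tilde g_d:=g\circ(\rho\,\pi_d)\colon A_d\to X$ satisfies, exactly, $A_{\tilde g_d}=A_g$ and $\mathrm{Area}_\omega(\tilde g_d(A_d))=d\cdot\mathrm{Area}_\omega(g(\mathbb{D}_\rho))$ (degree formula for $2$-forms under proper maps), while its inner boundary traces $g(\partial\mathbb{D}_\rho)$ once and its outer boundary traces it $d-1$ times.

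One then places a conformal copy of $A_d$ as a round sub-annulus $A(R_*,R')\subset\mathbb{C}$, assigns $F$ to $\overline{\mathbb{D}}_R$ and $\tilde g_d$ to $A(R_*,R')$, and uses the weak Oka-1 property (equivalently, Runge approximation on discs) to fuse these into a single entire curve $F'$ that is $\epsilon$-close to $F$ on $\overline{\mathbb{D}}_R$ and to $\tilde g_d$ on $A(R_*,R')$. The step I expect to be the main obstacle is that this fusion must be carried out \emph{with an a priori area estimate}: the ``junk'' area $\mathrm{Area}_\omega\big(F'(\mathbb{D}_{R'}\setminus(\mathbb{D}_R\cup A(R_*,R')))\big)$ must be bounded by $C\big(\mathrm{Area}_\omega(F(\mathbb{D}_R))+\mathrm{Area}_\omega(g(\mathbb{D}_\rho))\big)$ with $C$ independent of $d$. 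Heuristically this should hold because the transitional region $A(R,R_*)$ only has to interpolate between the curve $F(\partial\mathbb{D}_R)$—which bounds the disc $F(\mathbb{D}_R)$—and the inner boundary of $\tilde g_d$, which, winding only once around $g(\partial\mathbb{D}_\rho)$, bounds a disc of area $\mathrm{Area}_\omega(g(\mathbb{D}_\rho))$; neither quantity grows with $d$, which is exactly why the winding number $1$ in the branched cover is essential. Turning this into a rigorous holomorphic construction requires a quantitative, ``with estimates'' refinement of the weak Oka-1 approximation that controls the area of the approximant on the whole disc, not merely on the two patched pieces; this is the technical core of the paper.

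Granting the controlled splice (and performing it with accuracy chosen after $d$), one has $\mathrm{Area}_\omega(F'(\mathbb{D}_{R'}))=d\cdot\mathrm{Area}_\omega(g(\mathbb{D}_\rho))+O(1)$ with the $O(1)$ bounded independently of $d$, so $A_{F'|_{\overline{\mathbb{D}}_{R'}}}$ and $A_{\tilde g_d}=A_g$ differ by a current of mass $O(1/d)$; and since $F'|_{\partial\mathbb{D}_{R'}}$ is close to the outer boundary of $\tilde g_d$, one gets $\mathrm{Length}_\omega(F'(\partial\mathbb{D}_{R'}))\approx(d-1)\,\mathrm{Length}_\omega(g(\partial\mathbb{D}_\rho))\leqslant\epsilon\,\mathrm{Area}_\omega(F'(\mathbb{D}_{R'}))$ from the smallness of the length--area ratio of $g$. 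Taking $d$ large enough completes the amalgamation step, and with it the theorem; everything outside the controlled splice is routine bookkeeping with weak-$*$ limits of currents and Cauchy estimates. This refines the strategy behind Theorem~\ref{thm:Xie24}, the novelty being that the amplification via $\pi_d$ together with the area-controlled splice lets one stay within a single family of \emph{concentric} discs.
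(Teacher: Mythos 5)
Your high-level strategy—amalgamating a countable family of discs into one entire curve by an induction that, at each step, overwhelms the ``pollution'' from the inner disc with a large degree-$d$ multiple cover of the new disc—is exactly the strategy of the paper's proof. However, the way you implement the amplification and the splice runs into a genuine obstruction that the paper is designed to avoid.

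\textbf{The Runge obstruction.} You want to glue $F$ on $\overline{\mathbb{D}}_R$ to $\tilde g_d$ on a concentric round annulus $\overline{A}(R_*,R')$, producing a holomorphic map on $\overline{\mathbb{D}}_{R'}$ (and then an entire curve). But the compact set $K=\overline{\mathbb{D}}_R\cup\overline{A}(R_*,R')$ has \emph{disconnected} complement in $\mathbb{C}$: besides the unbounded component, there is the bounded annular gap $A(R,R_*)$. The Mergelyan/Theorem~\ref{thm:mergelyan-manifold-valued} machinery will give you holomorphic maps on small neighborhoods of $K$, but these neighborhoods retain the annular hole, and passing from such a map to one on $\overline{\mathbb{D}}_{R'}$ is precisely what Runge-type theorems forbid when the complement is disconnected. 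The weak Oka-1 property (Definition~\ref{def:xwoka1}) and its equivalent Runge formulation (Definition~\ref{def:awoka1}) only speak of \emph{pairs of disjoint discs} inside a bigger disc and of maps on neighborhoods of \emph{closed discs}; they say nothing about approximating a map on an annulus by entire curves. Since $\tilde g_d=g\circ(\rho\,\pi_d)$ involves a branched cover $\pi_d$ that does not extend holomorphically over the hole, there is no reason $\tilde g_d$ is even in the closure of restrictions of entire curves, so the fusion step has no support from the hypotheses. The paper sidesteps this entirely: it puts the amplified copy of $g$ (using the scalar map $z\mapsto z^k$ rather than a branched cover of an annulus) in a \emph{separate disc far out on the real axis}, joins the two discs by a short segment to form a simply connected dumbbell $K_{n+1}$ (an admissible set with connected complement), and then conformally rounds the dumbbell back to a disc $\mathbb{D}_{R_{n+1}}$ via the Riemann map, controlled by Carath\'eodory kernel convergence (Lemma~\ref{lem:dumbbell domain convergence}). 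Staying simply connected throughout is not an aesthetic choice—it is what makes the Runge/Mergelyan tools apply.

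\textbf{The area control you flag.} You are right that an a priori bound on the ``junk'' area is the technical heart, but the mechanism for it is also tied to the dumbbell geometry and so is not available in your concentric setup. In the paper one first fixes the degree $k$ and obtains a holomorphic map $h_{n+1,k}$ on a fixed neighborhood of $K_{n+1}$ that is $\mathcal{C}^1$-close to the pieced-together map $\chi_{n+1,k}$; only \emph{afterwards} does one shrink the width of the neck of the dumbbell $H_{n+1,k}$. Because $h_{n+1,k}$ has bounded $\mathcal{C}^1$-norm on a compact set containing the segment $L_{n+1}$, the image area over the thin neck and the extra boundary length are both $o(1)$ as the neck shrinks, uniformly in $k$, which yields estimates \eqref{eq:length-area-estimate of K_n-2}--\eqref{eq:limit curve convergence2-2}. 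In your picture the transition region $A(R,R_*)$ must interpolate between two \emph{prescribed} boundary behaviours across a round annulus; making that annulus thin forces large derivatives by Cauchy estimates, making it thick gives no area bound, and there is no fixed reference $\mathcal{C}^1$-map on a fixed compact on which to lean. So the hoped-for ``controlled splice'' is not merely a technical refinement of weak Oka-1 approximation—it would require a statement that, in the form you use it, is false already for $X=\mathbb{C}$ (e.g.\ $1/z$ on an annulus cannot be $C^0$-approximated by entire functions).

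In short: keep the induction, the degree-$d$ amplification, and the ``freeze the errors first'' bookkeeping—all of that matches the paper—but replace the concentric annulus by the paper's far-away-disc-plus-thin-neck dumbbell, use $z\mapsto z^k$ instead of a branched cover, and recover concentric discs at the end via the Riemann mapping of the dumbbell.
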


This result reveals a remarkable flexibility of entire curves in Oka geometry. By showing that the concentric discs from a single entire curve are sufficient, it strengthens Theorem~\ref{thm:Xie24} and thereby fully resolves Sibony's conjecture~\ref{conj:sibony} for weak Oka-1 manifolds.

\smallskip
The proof of Theorem~\ref{thm:main} is based on a delicate induction process, synthesizing ideas from three key sources:
\begin{itemize}
\item[(i)] Birkhoff's method of constructing universal entire functions from countable families of holomorphic functions on discs~\cite{Birkhoff29};

\smallskip
\item[(ii)] the gluing technique in several complex variables, pioneered by Fornaess and Stout~\cite{FornaessStout77a, FornaessStout77b};

\smallskip
\item[(iii)] the ``ping-pong'' strategy introduced in~\cite{Xie24}.
\end{itemize}

\smallskip
Although the technical machinery involved is sophisticated, the essence of our approach  ultimately reduces to the simple identities \eqref{naive identity 1} and \eqref{naive identity 2}.

Our novel inductive construction of such entire curves can  be also applied to deduce all previous results, established in~\cite{HuynhXie21, Xie24, WuXie25}, on Siu's decompositions of Ahlfors currents.


\medskip
This paper is structured as follows. Section~\ref{sect:A Gluing Technique} recalls  the classical holomorphic approximation theory and explains its application in our setting. Section~\ref{section:2} establishes the equivalence of three characterizations of weak Oka-1 manifolds. The proof of the main theorem is presented in Section~\ref{section:Ahlfors Currents}. Finally, Section~\ref{section:4} discusses three open problems concerning (weak) Oka-1 manifolds.

\section{\bf A Holomorphic Gluing Technique}
\label{sect:A Gluing Technique}

\subsection{ Holomorphic Approximation Theory}
We begin by recalling several key results from holomorphic approximation theory, which will play essential roles in our constructions. For a comprehensive treatment, we refer to \cite{FornaessForstnericWold20} and \cite{AlarconForstneric21}.

Our starting point is the classical theory of holomorphic approximation on the complex plane $\mathbb{C}$. One of the earliest and most influential results in this area is Runge's approximation theorem \cite{Run85}:

\begin{thm} \label{thm:runge approximation}
Let $K$ be a compact subset of $\mathbb{C}$ such that its complement $\mathbb{C}\setminus K$ is path connected. Then any holomorphic function $f$ defined on a neighborhood of $K$ can be approximated uniformly on $K$ by a sequence of  polynomials.
\end{thm}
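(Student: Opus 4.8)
The plan is to prove Runge's theorem by the classical two-stage argument: first approximate $f$ uniformly on $K$ by rational functions whose poles avoid $K$, and then ``push those poles to infinity'' to replace them by polynomials, the topological hypothesis on $\mathbb{C}\setminus K$ entering only in the second stage.

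\emph{Stage 1 (Cauchy representation and Riemann sums).} Let $U\supset K$ be an open set on which $f$ is holomorphic. Choose a square grid in $\mathbb{C}$ of mesh so small that every closed grid square meeting $K$ is contained in $U$, and translate the grid generically so that no point of $K$ lies on a grid line. Let $L$ be the union of the (finitely many) closed grid squares meeting $K$, so that $K\subset \operatorname{int}(L)\subset L\subset U$, and set $\gamma=\partial L$, a finite union of closed polygonal curves contained in $U\setminus K$ (its points lie on grid lines, which avoid $K$). A standard winding-number computation gives $n(\gamma,z)=1$ for $z$ in the interior of any grid square of $L$, hence for every $z\in K$; combined with the homology form of Cauchy's integral formula this yields
\[
f(z)=\frac{1}{2\pi i}\int_{\gamma}\frac{f(\zeta)}{\zeta-z}\,d\zeta\qquad(z\in K).
\]
Since $\gamma$ and $K$ are disjoint compact sets, $(\zeta,z)\mapsto f(\zeta)/(\zeta-z)$ is uniformly continuous on $\gamma\times K$, so the Riemann sums $\sum_j \frac{f(\zeta_j)}{\zeta_j-z}\,\Delta\zeta_j$ over sufficiently fine partitions of $\gamma$ approximate $f(z)$ uniformly for $z\in K$. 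Each such sum is a rational function of $z$ with all poles on $\gamma\subset\mathbb{C}\setminus K$. Thus $f$ is a uniform limit on $K$ of rational functions with poles in $\mathbb{C}\setminus K$.

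\emph{Stage 2 (pole pushing).} By partial fractions, a rational function with poles in $\mathbb{C}\setminus K$ is a polynomial plus a finite linear combination of terms $(z-a)^{-m}$ with $a\in\mathbb{C}\setminus K$ and $m\ge 1$; since $K$ is compact, a uniform polynomial approximation of $(z-a)^{-1}$ on $K$ yields one of $(z-a)^{-m}$ by taking $m$-th powers. It therefore suffices to prove that $G=\mathbb{C}\setminus K$, where
\[
G:=\bigl\{a\in\mathbb{C}\setminus K:\ (z-a)^{-1}\text{ is a uniform limit on }K\text{ of polynomials in }z\bigr\}.
\]
For $|a|>\sup_{z\in K}|z|$ the expansion $(z-a)^{-1}=-\sum_{n\ge 0} z^{n}a^{-n-1}$ converges uniformly on $K$, so $G$ contains a neighbourhood of infinity; in particular $G\neq\varnothing$. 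Moreover, whenever $a_0\in G$ and $a_1\in\mathbb{C}\setminus K$ satisfy $\sup_{z\in K}\bigl|\tfrac{a_1-a_0}{z-a_0}\bigr|<1$, the series $\sum_{n\ge 0}\tfrac{(a_1-a_0)^{n}}{(z-a_0)^{n+1}}$ converges uniformly on $K$ to $(z-a_1)^{-1}$, and its partial sums are polynomials in $(z-a_0)^{-1}$, hence uniform limits on $K$ of polynomials in $z$; therefore $a_1\in G$. Applying this with $a_1$ in a small ball around any $a_0\in G$ shows that $G$ is open, and applying it to a point $a_1\in(\mathbb{C}\setminus K)\cap\overline{G}$ together with a sufficiently close $a_0\in G$ shows that $G$ is closed in $\mathbb{C}\setminus K$. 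Since $\mathbb{C}\setminus K$ is connected and $G$ is a nonempty open-and-closed subset, $G=\mathbb{C}\setminus K$.

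Combining the two stages, $f$ is a uniform limit on $K$ of polynomials; choosing for each $n$ a polynomial $p_n$ with $\sup_K|f-p_n|<1/n$ produces the required sequence. The only genuinely non-routine points are the winding-number verification for the grid cycle $\gamma$ in Stage 1 (dealt with by the generic translation making $K$ avoid all grid lines) and the open–closed argument for $G$ in Stage 2, which is precisely where the hypothesis on $\mathbb{C}\setminus K$ is used; everything else reduces to uniform continuity on $\gamma\times K$ and uniformly convergent geometric series on the compact set $K$.
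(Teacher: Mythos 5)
Your proof is correct: it is the standard textbook argument for Runge's theorem (Cauchy representation over a grid contour, Riemann-sum approximation by rationals with poles off $K$, then pole-pushing via the open--closed set $G\subset\mathbb{C}\setminus K$, using that $\mathbb{C}\setminus K$ is connected, which for an open set is equivalent to being path connected). The paper itself does not prove this statement; it simply cites it as Runge's classical result, so there is no internal argument to compare against. Your write-up fills in all the standard details correctly, including the generic translation of the grid so that $K$ misses the grid lines, the winding-number check, and the observation that a uniform polynomial approximation of $(z-a)^{-1}$ on $K$ yields one of $(z-a)^{-m}$ by taking $m$-th powers.
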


The theorem of Mergelyan \cite{Mer52} is a profound generalization of Runge's theorem, providing precise conditions on a compact set $K \subset \mathbb{C}$ to ensure that every function in $\mathcal{A}(K)$ (i.e., continuous on $K$ and holomorphic in its interior) can be uniformly approximated by holomorphic functions on neighborhoods of $K$ (cf.~\cite[Section 7]{FornaessForstnericWold20}). This result has been extensively generalized, notably to the context of $\mathcal{C}^r$-approximation on \textit{admissible sets}.


\begin{defi}[cf.~{\cite[Definition 3]{FornaessForstnericWold20}}] \label{def:admissible-set}\rm
A compact set \( N \) in a Riemann surface \( M \) is called \emph{admissible} if it can be written as \( N = K \cup E \), where:
\begin{itemize}
    \item \( K \) is a finite union of pairwise disjoint compact domains with piecewise \( \mathcal{C}^1 \) boundaries.
    \item \( E \) is a finite union of pairwise disjoint smooth Jordan arcs and closed Jordan curves.
    \item The set \( E \) intersects \( K \) at most at the endpoints of these arcs, and all such intersections are transverse to the boundary \( \partial K \).
\end{itemize}
\end{defi}

\begin{thm}[Mergelyan theorem for manifold-valued maps on admissible sets; cf. {\cite[p.~74]{AlarconForstneric21}}]
\label{thm:mergelyan-manifold-valued}
Let \( M \) be a Riemann surface, \( X \) be a complex manifold, and \( N \) be an admissible set in \( M \). For any \( r \in \mathbb{Z}_{\geqslant 0} \), every map \( f \in \mathcal{A}^r(N, X) \) can be approximated in the \( \mathcal{C}^r(N, X) \) topology by holomorphic maps \( U \to X \) defined on some open neighbourhoods \( U \) of \( N \). 
\end{thm}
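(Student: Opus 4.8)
The plan is the classical three-step scheme for upgrading scalar approximation to manifold-valued approximation: localize in charts of $X$, invoke the Mergelyan theorem on admissible sets for $\mathbb{C}^n$-valued maps, and glue the resulting local holomorphic approximants. The starting point is the case $X=\mathbb{C}^n$ of Theorem~\ref{thm:mergelyan-manifold-valued}, which, applied coordinate by coordinate, reduces to the classical $\mathcal{C}^r$-Mergelyan theorem for a scalar function on an admissible set $N=K\cup E$ in a Riemann surface (Mergelyan's theorem on the compact-domain part $K$, together with a Bishop-style treatment of the Jordan-arc part $E$; the transversality demanded in Definition~\ref{def:admissible-set} is precisely what lets the two parts be reconciled). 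I would take this as known and cite it, e.g.\ from~\cite{FornaessForstnericWold20}.

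\emph{Localization.} Since $N$ is compact, $f(N)\subset X$ is compact, hence covered by finitely many holomorphic charts $\varphi_\nu\colon V_\nu\xrightarrow{\ \sim\ }\Omega_\nu\subset\mathbb{C}^{\dim X}$. A Lebesgue-number argument applied to a sufficiently fine generic subdivision of $N$ (say, intersect $N$ with a fine grid adapted to its domain-and-arc structure, then slightly fatten the resulting pieces) yields a finite admissible cover $N=N_1\cup\cdots\cup N_k$ with the properties that (i) $f(N_i)\subset V_{\nu(i)}$ for each $i$, and (ii) the indexing can be arranged so that each partial union $A_j:=N_1\cup\cdots\cup N_j$ forms, together with $N_{j+1}$, a Cartan pair in $M$, i.e.\ $\overline{A_j\setminus N_{j+1}}$ and $\overline{N_{j+1}\setminus A_j}$ are disjoint and $A_j\cap N_{j+1}$ is again admissible. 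On each $N_i$ the map $\varphi_{\nu(i)}\circ f|_{N_i}$ belongs to $\mathcal{A}^r(N_i,\mathbb{C}^{\dim X})$, so by the $\mathbb{C}^n$-valued Mergelyan theorem there is a holomorphic map $g_i$ on a neighbourhood of $N_i$, with image in $V_{\nu(i)}$, that is $\mathcal{C}^r$-close to $f$ on $N_i$ to within any prescribed tolerance.

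\emph{Gluing and conclusion.} The crux is a Cartan-type gluing lemma for $X$-valued maps: if $(A,B)$ is a Cartan pair in $M$ and $a,b$ are holomorphic near $A$, $B$ respectively and sufficiently $\mathcal{C}^r$-close to each other on a neighbourhood of $C:=A\cap B$, then there is a holomorphic map $c$ on a neighbourhood of $A\cup B$ that is $\mathcal{C}^r$-close to $a$ on $A$ and to $b$ on $B$, with the output closeness controlled linearly by the input closeness. Granting this, I would glue $g_1,\dots,g_k$ one at a time: put $h_1=g_1$; given $h_j$ holomorphic near $A_j$ and $\mathcal{C}^r$-close to $f$ on $A_j$, observe that $h_j$ and $g_{j+1}$ are then $\mathcal{C}^r$-close to each other on a neighbourhood of $A_j\cap N_{j+1}$ (both being close to $f$ there), apply the gluing lemma to produce $h_{j+1}$ holomorphic near $A_{j+1}$ and $\mathcal{C}^r$-close to both, hence to $f$ on $A_{j+1}=A_j\cup N_{j+1}$. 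Choosing all local tolerances small enough that the accumulated errors stay below a prescribed $\varepsilon$, the final $h_k$ is holomorphic on a neighbourhood of $N$ and $\varepsilon$-close to $f$ in $\mathcal{C}^r(N,X)$.

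\emph{The main obstacle.} The gluing lemma is where the real work lies. Since $a$ and $b$ are $\mathcal{C}^0$-close near $C$, their images there lie in a single chart $\psi\colon V\to\Omega\subset\mathbb{C}^n$ with $n=\dim X$; writing $\alpha=\psi\circ a$ and $\beta=\psi\circ b$ near $C$, the ``difference'' $\beta-\alpha$ is $\mathcal{C}^r$-small there. The Cartan splitting lemma with estimates — solve the additive Cousin problem over $(A,B)$ via bounded Cauchy-type solution operators on the Riemann surface, the transversality in Definition~\ref{def:admissible-set} guaranteeing $\mathcal{C}^r$-regularity up to the arc part $E$ — provides a decomposition $\beta-\alpha=\xi_A-\xi_B$ with $\xi_A,\xi_B$ holomorphic and $\mathcal{C}^r$-small on neighbourhoods of $A$, $B$. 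Then $\alpha+\xi_A=\beta+\xi_B$ near $C$, so $\psi^{-1}(\alpha+\xi_A)$ and $\psi^{-1}(\beta+\xi_B)$ glue to a single holomorphic map near $C$; one then extends it, keeping $a$ over $A\setminus B$ and $b$ over $B\setminus A$, by a Banach fixed-point argument that realizes $c$ as a small perturbation of $a$ and of $b$ there. The genuinely delicate points are reconciling the chart-localized object $\beta-\alpha$, defined only near $C$, with the globally defined splitting pieces $\xi_A,\xi_B$, and carrying all estimates through in the full $\mathcal{C}^r$ topology, including along the one-dimensional part $E$ where ``$\mathcal{C}^r$'' refers to arc/boundary regularity; everything else is bookkeeping over the classical scalar Mergelyan theorem. (If $X$ is Stein the gluing can be bypassed entirely: embed $X\hookrightarrow\mathbb{C}^N$, extend $f$ near $N$ to a $\mathcal{C}^r$ map holomorphic to order $>r$ along $N$, solve $\bar\partial$ with $\mathcal{C}^r$-estimates on a bordered Stein neighbourhood of $N$ to correct it to a holomorphic map into a tubular neighbourhood of $X$, and compose with a holomorphic retraction onto $X$. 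It is exactly the absence of such a global embedding for arbitrary $X$ that forces the local-and-glue argument.)
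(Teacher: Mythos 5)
Your proposal is correct in spirit, but it takes a genuinely different route from the paper, and in fact the paper's route is precisely the one you dismiss in your closing parenthetical as unavailable in general. You observe that if $X$ is Stein one can embed $X\hookrightarrow\mathbb{C}^N$, correct by a $\bar\partial$-equation, and retract, and then argue that the absence of such an embedding ``forces the local-and-glue argument.'' The paper instead notices that one never needs $X$ itself to embed: it suffices that the \emph{graph} $\Gamma_f(N)\subset M\times X$ admit a Stein neighborhood, since one can then embed that neighborhood in some $\mathbb{C}^N$, apply the scalar Mergelyan theorem on the admissible set (Theorem~\ref{thm:16}) componentwise, retract via Docquier--Grauert, and project back to $X$ (this is exactly Lemma~\ref{lem:Aksteinnbhd}). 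The existence of a Stein neighborhood of the graph is then supplied, for \emph{arbitrary} target $X$, by Poletsky's theorem (Theorem~\ref{thm:32}): since $N$ is a Stein compact in the Riemann surface $M$ and $N$ has the strong local Mergelyan property (Theorem~\ref{thm:14}), one gets $\mathcal{A}^0(N,X)=\overline{\mathcal{O}}_{\mathrm{loc}}(N,X)$ (Lemma~\ref{coro:6}), and Poletsky's theorem says the graph of any locally approximable map over a Stein compact is itself a Stein compact. So the ``bypass'' you reserve for Stein $X$ actually works in full generality.

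Your local-and-glue scheme (Cartan pairs on the admissible set, local scalar Mergelyan in charts of $X$, bounded additive Cousin splitting plus a fixed-point perturbation to reconcile the pieces) is the other standard proof pattern in Oka-style approximation theory and would also yield the theorem, provided the gluing lemma for $X$-valued maps with $\mathcal{C}^r$-estimates along the arc part $E$ is carried out carefully — that is genuinely the crux, as you say, and you would need to spell out the bounded solution operator for the Cousin problem adapted to the piecewise-$\mathcal{C}^1$ boundary of $K\cup E$ and the passage from chart-local corrections $\xi_A,\xi_B$ back to $X$-valued perturbations of $a,b$ away from $C$. What each approach buys: yours is elementary and self-contained modulo the gluing lemma (and generalizes readily to settings where one controls sprays, as in the Oka principle), whereas the paper's is much shorter but rests on Poletsky's Stein-neighborhood theorem, whose proof (reproduced in the paper) is itself a nontrivial fusion of local plurisubharmonic functions. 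Neither approach is wrong; yours just fails to exploit the shortcut that makes the Stein-embedding argument available unconditionally.
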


Following \cite{FornaessForstnericWold20}, for \( r \in \mathbb{Z}_{\geq 0} \), we denote by \( \mathcal{C}^r(N, X) \) the space of \( r \)-times continuously differentiable maps from a neighborhood of \( N \) into \( X \), and by \( \mathcal{A}^r(N, X) \) the subspace of maps that are holomorphic on the interior \( \mathring{N} \). 

\begin{rmk}
\label{how to apply Thm 2.3}
To apply the above theorem to an \( r \)-times continuously differentiable map \( f \colon N \to X \) that is holomorphic on \( \mathring{N} \), we need to extend \( f \) to a neighborhood of \( N \). It is a consequence of Whitney's extension theorem \cite{Whitney34, FornaessForstnericWold20}; see Proposition~\ref{prop:extension-admissible}. 
\end{rmk}


\subsection{Gluing Two Holomorphic Discs}

The following Propositions~\ref{almost geodesic link} and \ref{prop:extension-admissible} are standard facts in differential topology; their proofs are therefore omitted.

\begin{pro}
\label{almost geodesic link}
Let $(M,g)$ be a complete Riemannian manifold. For any two distinct points $p,q \in M$, any tangent vectors $v_p \in T_pM$, $v_q \in T_qM$, and any $\epsilon > 0$, there exists a smooth curve $\gamma: [0,1] \to M$ connecting $p$ to $q$ with $\gamma'(0) = v_p$, $\gamma'(1) = v_q$, and length $L(\gamma) < \mathrm{d}_M(p,q) + \epsilon$.
\qed
\end{pro}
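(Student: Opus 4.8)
The plan is to reduce the statement to the Hopf--Rinow theorem together with a local interpolation of prescribed $1$-jet data at the endpoints. Since $(M,g)$ is complete, Hopf--Rinow provides a unit-speed minimizing geodesic $\sigma \colon [0,\ell] \to M$ with $\sigma(0)=p$, $\sigma(\ell)=q$ and $\ell = \mathrm{d}_M(p,q) > 0$. Fix a small parameter $\delta \in (0,\ell/2)$, to be pinned down at the end, and set $p' := \sigma(\delta)$, $q' := \sigma(\ell-\delta)$, with unit velocity vectors $u_{p'} := \sigma'(\delta)$ and $u_{q'} := \sigma'(\ell-\delta)$. The middle arc $\sigma|_{[\delta,\ell-\delta]}$ already joins $p'$ to $q'$ with the ``correct'' endpoint velocities $u_{p'},u_{q'}$ and has length $\ell-2\delta$; it remains only to attach to its two ends short caps that correct the boundary data to $v_p$ at $p$ and to $v_q$ at $q$.

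The key local step is the following. Working in a normal coordinate chart centered at $p$, both $p$ and $p'$ lie in the Euclidean ball of radius $2\delta$ once $\delta$ is small. I would first take the cubic Hermite interpolant $c$ between the coordinate points of $p$ and $p'$ with \emph{artificially small} prescribed velocities --- say $\delta^2$ times the unit vector along $v_p$ at the start and $\delta\,u_{p'}$ at the end --- so that $c$ stays within $C\delta$ of $p$ and has length $\le C'\delta$ for chart-dependent constants. Then I reparametrize, $\alpha(t):=c(\phi(t))$ with $\phi\colon[0,1]\to[0,1]$ a smooth increasing bijection whose endpoint derivatives $\phi'(0),\phi'(1)$ are chosen positive and finite so that $\alpha'(0)=v_p$ and $\alpha'(1)=u_{p'}$ exactly. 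Reparametrization changes neither the image nor the length, so $\alpha$ is a smooth curve from $p$ to $p'$ of length $O(\delta)$ with the desired endpoint velocities; the same construction near $q$ produces $\beta$ from $q'$ to $q$ of length $O(\delta)$ with $\beta'(0)=u_{q'}$ and $\beta'(1)=v_q$. This reparametrization trick is exactly what lets one absorb arbitrarily large $|v_p|,|v_q|$ while keeping the caps confined to tiny balls, since the large velocity is attained only at a single parameter value and contributes nothing to the length integral.

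Concatenating $\alpha$, then $\sigma|_{[\delta,\ell-\delta]}$ (kept at unit speed), then $\beta$ yields a curve from $p$ to $q$ which is $C^1$ at the two junctions, because there the one-sided velocities agree \emph{exactly} with $u_{p'}$, respectively $u_{q'}$. To upgrade it to a genuinely smooth curve I would, near each junction point, replace it on a tiny parameter interval by the bump-function interpolation of the two $C^\infty$ curve-germs --- in a normal chart, $\psi\cdot(\text{left germ})+(1-\psi)\cdot(\text{right germ})$ with $\psi$ a suitable cutoff --- which is $C^\infty$, agrees with each germ to infinite order at the ends of the interval, and, since both germs pass through the junction, stays in an arbitrarily small ball around it and hence contributes arbitrarily little length. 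Calling the result $\gamma\colon[0,1]\to M$ after a final affine reparametrization of the domain, one gets $L(\gamma)\le O(\delta)+(\ell-2\delta)+O(\delta)+o(1)<\ell+\epsilon=\mathrm{d}_M(p,q)+\epsilon$ once $\delta$ and the smoothing scales are taken small enough, while $\gamma(0)=p$, $\gamma(1)=q$, $\gamma'(0)=v_p$, $\gamma'(1)=v_q$ by construction.

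I do not anticipate any deep obstruction here --- this is a soft ``differential-topology folklore'' fact, which is presumably why the authors omit its proof. The only two points that require genuine care are: (i) controlling the length and diameter of the two caps \emph{uniformly} with respect to the size of the prescribed velocities $v_p,v_q$, handled by the reparametrization trick above; and (ii) ensuring the three-piece concatenation is actually $C^\infty$ rather than merely $C^1$ without spoiling the length estimate, handled by the local bump-function smoothing at the junctions. Both are routine.
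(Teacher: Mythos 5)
The paper states this proposition without proof, remarking only that it is ``a standard fact in differential topology,'' so there is no paper proof to compare against. Your route --- a unit-speed minimizing geodesic from Hopf--Rinow, two short Hermite-interpolation caps built in normal coordinates, a reparametrization to realize arbitrary endpoint velocities without increasing the length, and a local bump-function smoothing at the $C^1$ junctions --- is the expected argument and is essentially correct. Two small gaps remain in the writeup, both easily repaired but worth flagging.

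First, the phrase ``the unit vector along $v_p$'' is undefined when $v_p = 0$, and the statement allows arbitrary $v_p \in T_pM$. In that case you should just prescribe zero initial velocity in the Hermite interpolant itself (no unit vector needed); then any smooth reparametrization with $\phi'(0)>0$ yields $\alpha'(0)=c'(0)\phi'(0)=0=v_p$ automatically. Same remark for $v_q$.

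Second, and more substantively, your ``final affine reparametrization'' destroys the endpoint velocity conditions. If the concatenated and smoothed curve is $\tilde\gamma\colon[0,T]\to M$ with $\tilde\gamma'(0)=v_p$ and $\tilde\gamma'(T)=v_q$, and you set $\gamma(s):=\tilde\gamma(Ts)$, then $\gamma'(0)=T\,v_p$ and $\gamma'(1)=T\,v_q$, which is not what was claimed unless $T=1$. To fix this, either (a) anticipate the rescaling by building the caps with endpoint velocities $T\,v_p$ and $T\,v_q$ in place of $v_p$ and $v_q$ (fine, since $T=2+\ell-2\delta$ is known once $\delta$ is fixed), or (b) use a non-affine domain reparametrization $\tau\colon[0,1]\to[0,T]$ with $\tau'(0)=\tau'(1)=1$, constructed by the same elementary bump-function method as the paper's Proposition~\ref{rmk:reparametrization}. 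With either fix, the rest of your length and smoothing estimates go through unchanged.
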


This proposition will be used in \textbf{Trick 3} below.

\begin{pro}
\label{prop:extension-admissible}
Let \( M \) be a Riemann surface and \( N \subset M \) an admissible set (see Definition~\ref{def:admissible-set}). 
Let \( Y \) be a smooth manifold and \( f \colon N \to Y \) a continuously differentiable map. 
Then there exist an open neighborhood \( V \) of \( N \) in \( M \) and a continuously differentiable map \( F \colon V \to Y \) such that \( F|_N = f \).\qed
\end{pro}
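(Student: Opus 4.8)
The plan is to reduce the statement to the classical $C^1$ Whitney extension theorem in Euclidean space, carried out stratum by stratum along the structure $N = K \cup E$ from Definition~\ref{def:admissible-set}, and then to globalize by a $C^1$ partition of unity. Here a \emph{continuously differentiable map $f \colon N \to Y$} is understood in the standard stratified sense: $f$ restricts to a $C^1$ map up to the boundary on each of the finitely many compact domains constituting $K$, to a $C^1$ map on each Jordan arc and closed Jordan curve constituting $E$, and the one-sided first derivatives agree at the finitely many junction points where $E$ meets $\partial K$.

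First I would dispose of the target manifold $Y$. By Whitney's embedding theorem, realize $Y$ as a closed $C^\infty$ submanifold $\iota \colon Y \hookrightarrow \mathbb{R}^{k}$, choose a tubular neighborhood $W \subset \mathbb{R}^{k}$ of $\iota(Y)$, and fix a smooth retraction $\pi \colon W \to Y$. It then suffices to extend the $\mathbb{R}^{k}$-valued map $\iota \circ f$ to a $C^1$ map $G$ on some open neighborhood $V_0 \supset N$ in $M$: since $G|_N = \iota \circ f$ takes values in $\iota(Y) \subset W$ and $W$ is open, by continuity $G$ maps a possibly smaller open neighborhood $V \supset N$ into $W$, whence $F := \pi \circ (G|_V) \colon V \to Y$ is $C^1$ and satisfies $F|_N = \pi \circ \iota \circ f = f$. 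We may therefore work with $\mathbb{R}^{k}$-valued maps throughout, where, crucially, convex combinations now make sense and the classical Euclidean $C^1$ Whitney extension theorem \cite{Whitney34} may be applied componentwise.

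Next I would build \emph{local} extensions. Since $N$ is compact, cover it by finitely many holomorphic coordinate charts of $M$ in each of which the local picture of $N$ is one of the following Euclidean models in $\mathbb{R}^{2}$ (or an evident combination thereof): (a) an open set (interior points of $K$), where $\iota \circ f$ is already defined; (b) a closed half-plane $\{y \geqslant 0\}$ (smooth boundary points of $\partial K$); (c) a closed quadrant $\{x \geqslant 0,\ y \geqslant 0\}$ (corner points of $\partial K$); (d) the segment $\{y = 0\}$ of the $x$-axis (interior points of an arc or closed curve of $E$); (e) the union $\{y \leqslant 0\} \cup \{x = 0,\ 0 \leqslant y \leqslant 1\}$ of a closed half-plane and a transverse segment (junction points of $E$ and $\partial K$)---here the transversality clause of Definition~\ref{def:admissible-set} is exactly what permits straightening the attaching arc to a segment transverse to the boundary line. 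In models (b) and (c) the restriction of $\iota \circ f$ is a $C^1$ map whose differential extends continuously up to the boundary, so it satisfies the first-order Whitney compatibility condition on the (convex) model set and extends to a $C^1$ map on a full neighborhood; in model (d) one simply sets $F(x,y) := (\iota \circ f)(x)$, constant in the transverse direction.

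The hard part, such as it is, is model (e): after using case (b) to extend $(\iota\circ f)|_{\{y \leqslant 0\}}$ to a $C^1$ map $g$ near the origin, $g$ need not agree with $\iota \circ f$ along the attaching segment beyond first order at the junction point. The discrepancy $h(y) := (\iota\circ f)(0,y) - g(0,y)$, defined for $0 \leqslant y \leqslant 1$, is $C^1$ with $h(0) = 0$ and $h'(0) = 0$ (the latter precisely because the one-sided derivatives of $f$ match at the junction), so its extension by $0$ to $\{y < 0\}$ is $C^1$ on an interval and, after a cutoff in $y$, yields a $C^1$ map $\tilde h$ on $\mathbb{R}$ supported in $[0,1]$. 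Choosing $\chi(x)$ with $\chi \equiv 1$ near $0$ and $\chi'(0) = 0$, the corrected map $g(x,y) + \chi(x)\,\tilde h(y)$ is $C^1$, agrees with $\iota \circ f$ on $\{y \leqslant 0\}$, and agrees with $\iota \circ f$ along the segment. Finally I would take a $C^1$ partition of unity $\{\rho_\alpha\}$ on a neighborhood $V_0$ of $N$ subordinate to the finitely many chart domains, with local $\mathbb{R}^{k}$-valued extensions $F_\alpha$ as above, and set $G := \sum_\alpha \rho_\alpha F_\alpha$; this is $C^1$ on $V_0$, and on $N$ it equals $\sum_\alpha \rho_\alpha\,(\iota\circ f) = \iota \circ f$. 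Pushing $G$ back through the retraction $\pi$ as in the second paragraph produces the desired $C^1$ map $F \colon V \to Y$ with $F|_N = f$, completing the argument.
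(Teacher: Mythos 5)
Your proof is correct and fills in the details that the paper itself omits: Proposition~\ref{prop:extension-admissible} is stated without proof, with Remark~\ref{how to apply Thm 2.3} merely asserting it is a consequence of Whitney's extension theorem~\cite{Whitney34}, and your argument carries out precisely that program via the standard reductions (Whitney embedding and tubular retraction of $Y$ into $\mathbb{R}^k$, chart-by-chart local Whitney extension, partition-of-unity globalization). The one slight gap in the case analysis is that models (a)--(e) do not explicitly cover the free endpoints of Jordan arcs in $E$ that do not meet $\partial K$: in the local model $\{y=0,\ x\geqslant 0\}$ you must first $C^1$-extend $\iota\circ f$ past $x=0$ in one variable before extending constantly in $y$, and model (c) should in principle also be paired with a transverse arc if a junction happens to land on a corner of $\partial K$; both are handled by the same elementary devices you already use. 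You should also note, when invoking the partition of unity, that the cutoff of $\tilde h$ in case (e) must occur outside the portion of $N$ covered by the relevant chart (or the chart must be shrunk), so that each local extension $F_\alpha$ agrees with $\iota\circ f$ on all of $N\cap\supp\rho_\alpha$ and $\sum_\alpha \rho_\alpha F_\alpha$ really restricts to $\iota\circ f$ on $N$.
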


In our subsequent application, we take \( M = \mathbb{R}^2 \) and consider an admissible set \( N \subset M \) consisting of two disjoint closed discs (with $\mathcal{C}^{\infty}$ boundaries) connected transversally by a short line segment (see Figure~\ref{fig:dumbbell domain}). 

\begin{figure}[!htbp]
    \centering
    \includegraphics[scale=0.6]{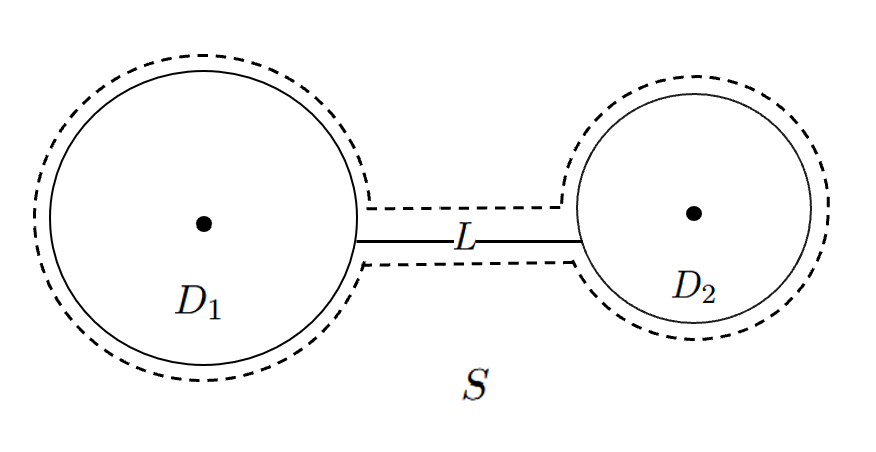}
    \caption{Dumbbell-shaped neighborhood of $D_1$ and $D_2$.}
    \label{fig:dumbbell domain}
\end{figure}

\begin{lem}\label{lem:dumbbell domain holomorphic map}
Let \( X \) be a complex manifold, and let \( D_1, D_2 \subset \mathbb{C} \) be two disjoint closed disks. For \( i = 1, 2 \), let \( f_i \colon U_i \to X \) be a holomorphic map defined on an open neighborhood \( U_i \) of \( D_i \). Then, for any \( \epsilon > 0 \), there exist a dumbbell-shaped domain \( S \subset \mathbb{C} \) containing \( D_1 \cup D_2 \) (see Figure~\ref{fig:dumbbell domain}) and a holomorphic map \( f \colon S \to X \) such that
\[
\| f - f_i \|_{\mathcal{C}^1} < \epsilon \quad \text{on $D_i$ \quad for } i = 1, 2.
\]
\end{lem}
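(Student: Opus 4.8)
The plan is to build the map $f$ as a Mergelyan-type holomorphic approximation of a $\mathcal{C}^1$ map defined on a dumbbell-shaped \emph{admissible set}, the latter obtained by bridging $f_1$ and $f_2$ along a short arc inside $X$.

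First, I would fix a complete Riemannian metric $g$ on $X$ (which exists by~\cite{NomizuOzeki1961}; here we may assume $X$ connected, for otherwise the assertion is vacuous unless $f_1(D_1)$ and $f_2(D_2)$ lie in a single connected component, in which case we replace $X$ by it). Choose boundary points $p_i\in\partial D_i$ with $f_1(p_1)\neq f_2(p_2)$ --- possible unless $f_1$ and $f_2$ are one and the same constant map, a case in which that constant map already proves the lemma. Join $p_1$ to $p_2$ by a smooth Jordan arc $E\subset\mathbb{C}$ meeting $D_1\cup D_2$ only at its endpoints and transversally to $\partial D_1,\partial D_2$ there, so that $N:=D_1\cup D_2\cup E$ is admissible in the sense of Definition~\ref{def:admissible-set} (a dumbbell, cf.\ Figure~\ref{fig:dumbbell domain}). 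Fix a smooth embedding $e\colon[0,L]\to\mathbb{C}$ parametrizing $E$ with $e(0)=p_1$, $e(L)=p_2$, and set $v_1:=\mathrm{d}f_1(p_1)\big(e'(0)\big)\in T_{f_1(p_1)}X$ and $v_2:=\mathrm{d}f_2(p_2)\big(e'(L)\big)\in T_{f_2(p_2)}X$; these are precisely the one-jet data along $E$ forced by $\mathcal{C}^1$-compatibility with $f_i$ at the junction points $p_i$.

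Next, by Proposition~\ref{almost geodesic link} applied to $(X,g)$ --- where only the existence of a smooth connecting curve with prescribed endpoint velocities is used, not the length bound --- choose a smooth curve $\phi\colon[0,L]\to X$ with $\phi(0)=f_1(p_1)$, $\phi(L)=f_2(p_2)$, $\phi'(0)=v_1$, $\phi'(L)=v_2$. Define $h\colon N\to X$ by $h|_{D_i}=f_i$ and $h|_E=\phi\circ e^{-1}$. Then $h$ is holomorphic on $\mathring N=\mathring D_1\cup\mathring D_2$, and at each junction $p_i$ the one-jet of $h$ coming from the disc side (namely the value $f_i(p_i)$ together with the linear map $\mathrm{d}f_i(p_i)$) is consistent with the one coming from the arc side (value $f_i(p_i)$, and derivative along $E$ equal to $v_i$, which by definition is $\mathrm{d}f_i(p_i)$ applied to the tangent of $E$); hence $h$ is $\mathcal{C}^1$ on the admissible set $N$. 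By Proposition~\ref{prop:extension-admissible} (Whitney extension; cf.\ Remark~\ref{how to apply Thm 2.3}) it extends to a $\mathcal{C}^1$ map $H$ on an open neighborhood of $N$, still holomorphic on $\mathring N$, so that $H\in\mathcal{A}^1(N,X)$.

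Finally, I would invoke the Mergelyan theorem for manifold-valued maps on admissible sets, Theorem~\ref{thm:mergelyan-manifold-valued}, with $r=1$: $H$ is approximable in the $\mathcal{C}^1(N,X)$ topology by holomorphic maps defined on open neighborhoods of $N$. Given $\epsilon>0$, pick such an approximant $g\colon W\to X$ on a neighborhood $W$ of $N$ whose $\mathcal{C}^1$-distance from $H$ on $N$ is $<\epsilon$; since $H=f_i$ on $D_i$, this yields $\|g-f_i\|_{\mathcal{C}^1}<\epsilon$ on $D_i$ for $i=1,2$. Shrinking $W$ to a dumbbell-shaped domain $S$ with $D_1\cup D_2\cup E\subset S\subset W$ (thicken $E$ to a thin tube and slightly enlarge each $D_i$, staying inside $W$) and setting $f:=g|_S$ completes the construction. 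I expect the main obstacle to be the middle step: arranging the bridging data so that $h$ is genuinely $\mathcal{C}^1$ across the two transverse junctions --- this is exactly what dictates the use of Proposition~\ref{almost geodesic link} with prescribed endpoint velocities --- together with correctly passing from the resulting $\mathcal{C}^1$ extension to a true holomorphic map via Theorem~\ref{thm:mergelyan-manifold-valued}; the degenerate cases ($X$ disconnected, or $f_1,f_2$ a common constant) must also be dispatched separately.
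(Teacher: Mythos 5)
Your proposal is correct and follows the same route as the paper's proof: bridge $D_1$ and $D_2$ by an arc to obtain an admissible dumbbell $N$, lift the bridge to a smooth path in $X$ via Proposition~\ref{almost geodesic link}, extend to a map in $\mathcal{A}^1(N,X)$ via Proposition~\ref{prop:extension-admissible}, approximate by Theorem~\ref{thm:mergelyan-manifold-valued}, and shrink the neighborhood to a dumbbell domain. You merely spell out the details the paper leaves implicit --- in particular the prescription of endpoint velocities to secure $\mathcal{C}^1$-compatibility at the two transverse junctions, and the degenerate cases (disconnected $X$, or $f_1,f_2$ the same constant) needed so that the proposition on almost-geodesics applies to genuinely distinct points.
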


The specific choice of the \(\mathcal{C}^1\)-norm is immaterial here due to the compactness of \(D_1 \cup D_2\). 
Gluing constructions of this type originate in the work of Fornaess and Stout \cite{FornaessStout77a,FornaessStout77b}, and the result is now standard in holomorphic approximation theory. 

\begin{proof}
Let $L$ be a short line segment connecting $D_1$ and $D_2$, and define the compact set $N := D_1 \cup L \cup D_2$ (see Figure~\ref{fig:dumbbell domain}), which is admissible (see Definition~\ref{def:admissible-set}).

By  Proposition~\ref{almost geodesic link}, we can find a continuously differentiable map \( g \colon N \to X \) satisfying \( g|_{D_i} = f_i \) for \( i = 1, 2 \), which consequently belongs to \( \mathcal{A}^1(N, X) \) by Proposition~\ref{prop:extension-admissible}.

By Theorem \ref{thm:mergelyan-manifold-valued}, there exists a holomorphic map $f$ defined on a neighborhood $\Omega$ of $N$ that approximates $g$ with $\Vert f-g\Vert_{\mathcal{C}^1}<\epsilon$ on $N$. By suitably shrinking $\Omega$, we may take it to be a dumbbell-shaped domain $S$ of the required form.
\end{proof}

\section{\bf Three Equivalent Characterizations of Weak Oka-1 Manifolds}
\label{section:2}

\begin{pro}
\label{equivalence of definitions}
The following properties of a complex manifold are equivalent:
\begin{enumerate}
\item[(i)] the weak Oka-1 property (Definition~\ref{def:xwoka1}),

\item[(ii)] the local Runge approximation property on discs (Definition~\ref{def:vwoka1}),

\item[(iii)] the Runge approximation property on discs (Definition~\ref{def:awoka1}).
\end{enumerate}
\end{pro}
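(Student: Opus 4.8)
The plan is to prove the three implications (iii) $\Rightarrow$ (i) $\Rightarrow$ (ii) $\Rightarrow$ (iii), which closes the cycle. The implication (iii) $\Rightarrow$ (i) is the easy direction: given two disjoint closed discs $D_1, D_2 \subset D$ and a holomorphic map $F$ on a neighborhood of $D_1 \cup D_2$, first apply Lemma~\ref{lem:dumbbell domain holomorphic map} to glue $F|_{D_1}$ and $F|_{D_2}$ into a single holomorphic map on a dumbbell-shaped domain $S \supset D_1 \cup D_2$ approximating $F$ to within $\mathsf{e}/2$ on $D_1 \cup D_2$; then choose a closed topological disc $D' \subset S$ containing $D_1 \cup D_2$ (the dumbbell is simply connected, so this is possible, and after a biholomorphism of a neighborhood we may treat $D'$ as a round disc), and invoke the Runge approximation property on discs to approximate this glued map uniformly on $D'$ by an entire curve $\hat F \colon \mathbb{C} \to X$ to within $\mathsf{e}/2$. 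The composition gives the desired $\mathsf{e}$-approximation on $D_1 \cup D_2$, and restricting $\hat F$ to a neighborhood $\hat U$ of $D$ finishes the argument. (Here one needs completeness of $\mathrm d_X$ only to make sense of the statement; no real use is made of it beyond the triangle inequality.)

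For (i) $\Rightarrow$ (ii): given $f_1 \colon U \to X$ defined near $\overline{\mathbb D}$ and $\epsilon > 0$, I would place inside $\overline{\mathbb D}_2$ two disjoint closed discs $D_1 = \overline{\mathbb D}$ and a tiny auxiliary disc $D_2$, and set $D = \overline{\mathbb D}_2$. Define a holomorphic map on a neighborhood of $D_1 \cup D_2$ by taking $f_1$ on $D_1$ and an arbitrary holomorphic map (say a constant) on $D_2$. The weak Oka-1 property then produces $\hat F$ holomorphic near $D = \overline{\mathbb D}_2$ with $\mathrm d_X(\hat F, f_1) \leqslant \epsilon$ on $D_1 = \overline{\mathbb D}$; this $\hat F$ (or its restriction) is the required $f_2 \colon V \to X$ on a neighborhood $V$ of $\overline{\mathbb D}_2$. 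The role of $D_2$ is purely to satisfy the formal hypothesis of Definition~\ref{def:xwoka1} that the source be a neighborhood of two disjoint discs.

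The implication (ii) $\Rightarrow$ (iii) is the substantive one, and I expect the rescaling-and-iteration bookkeeping to be the main point. Given a holomorphic map $f$ on a neighborhood of a closed disc $D$ and $\epsilon > 0$: after an affine change of coordinates on $\mathbb C$ we may assume $D = \overline{\mathbb D}$. Apply (ii) to get $f_2$ holomorphic near $\overline{\mathbb D}_2$ with $\sup_{\overline{\mathbb D}} \mathrm d_X(f, f_2) \leqslant \epsilon/2$. Now rescale: the map $z \mapsto f_2(2z)$ is holomorphic near $\overline{\mathbb D}$, so (ii) applies again to yield $f_3$ holomorphic near $\overline{\mathbb D}_2$ approximating it on $\overline{\mathbb D}$; unrescaling, $f_3(z/2)$ is holomorphic near $\overline{\mathbb D}_4$ and approximates $f_2$ on $\overline{\mathbb D}_2$ to within $\epsilon/4$, hence $f$ on $\overline{\mathbb D}$ to within $\epsilon/2 + \epsilon/4$. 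Iterating, I build a sequence $g_k$ holomorphic near $\overline{\mathbb D}_{2^k}$ with $\sup_{\overline{\mathbb D}_{2^{k-1}}} \mathrm d_X(g_k, g_{k-1}) \leqslant \epsilon \, 2^{-k}$. The hard part — and the reason some care is needed — is that $X$ is only a complete metric space, not complete in a way that automatically gives uniform Cauchy convergence of holomorphic maps to a \emph{holomorphic} limit. I would handle this by passing to a local chart around the (now fixed, by completeness and the geometric decay) limit values: on each compact $\overline{\mathbb D}_{2^{k-1}}$ the images $g_j(\overline{\mathbb D}_{2^{k-1}})$ for $j \geqslant k$ lie in a fixed compact set, which can be covered by finitely many coordinate balls; inside these the $\mathrm d_X$-estimates become Euclidean estimates, Cauchy in the sup norm, and the limit is holomorphic by Weierstrass's theorem. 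Taking the diagonal limit $F = \lim_k g_k$ gives an entire curve $F \colon \mathbb C \to X$ with $\sup_{\overline{\mathbb D}} \mathrm d_X(F, f) \leqslant \epsilon$, which is exactly the Runge approximation property on discs. The only genuine subtlety to watch is ensuring the geometric decay of the errors is preserved under the rescalings (which it is, since rescaling the source does not change $\mathrm d_X$-distances between values), so that the tails remain in a fixed compact neighborhood allowing the chart argument.
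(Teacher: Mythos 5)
Your implications (i) $\Rightarrow$ (ii) and (ii) $\Rightarrow$ (iii) are essentially the paper's arguments: the auxiliary constant disc to invoke the weak Oka-1 property, and the doubling/rescaling iteration with geometrically decaying errors. Your additional care about why the locally uniform limit is holomorphic (passing to coordinate charts and applying Weierstrass) is correct, and the paper leaves this implicit. These two directions are fine.

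There is, however, a genuine gap in your (iii) $\Rightarrow$ (i). You glue to a map $G$ on a dumbbell $S$, pick a topological disc $D' \subset S$ containing $D_1 \cup D_2$, straighten $D'$ to a round disc via a biholomorphism $\phi$ of a neighborhood, and apply (iii) to $G \circ \phi^{-1}$ to produce an entire $H \colon \mathbb{C} \to X$. But the map that then approximates $F$ on $D_1 \cup D_2$ is $H \circ \phi$, and $\phi$ is only defined on a neighborhood of $D'$ inside the thin dumbbell $S$; that neighborhood does not contain the large disc $D$. So $H \circ \phi$ is neither entire nor holomorphic on any neighborhood $\hat U$ of $D$, and ``restricting $\hat F$ to a neighborhood of $D$'' does not make sense. (Note also that one cannot in general choose $D'$ to be an honest round disc: any round disc containing both $D_1$ and $D_2$ would bulge far outside a thin-necked dumbbell.) The paper closes this gap with an extra step you omitted: after producing $H$, it uses Runge's theorem (Theorem~\ref{thm:runge approximation}) to approximate the conformal map $\phi$ on a compact $S' \subset S$ with path-connected complement by a \emph{polynomial} $\psi$, chosen so closely that $\psi(D_1 \cup D_2) \subset \mathbb{D}_{1-\delta}$ and $G \circ \phi^{-1}(\psi(z))$ stays $\epsilon/3$-close to $G(z)$ on $D_1 \cup D_2$; one then sets $\hat F := H \circ \psi$, which is entire (hence certainly holomorphic near $D$) and satisfies the required estimate by a three-term triangle inequality. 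Without this polynomial approximation of the Riemann map, the composition never leaves a neighborhood of the dumbbell, and the implication does not go through.
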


\begin{proof}
{\bf (i) $\Longrightarrow$ (ii):} 
Let $f_1: \overline{\mathbb{D}} \to X$ be a holomorphic map and let $f_2:\overline{\mathbb{D}}(\frac{3}{2}, \frac{1}{3}) \to X$ be a constant map defined on the closed disc $\overline{\mathbb{D}}(\frac{3}{2}, \frac{1}{3})\subset\overline{\mathbb{D}}_2$. For any $\epsilon>0$, by Definition \ref{def:xwoka1}, there exists a holomorphic map $f: \widehat{U} \to X$ on a neighborhood $\widehat{U}$ of $\overline{\mathbb{D}}_2$ that  approximates $f_1$  with $\mathrm{d}_X(f, f_1)<
\epsilon$ on $\overline{\mathbb{D}}$.

\smallskip\noindent {\bf
(ii) $\Longrightarrow$ (iii):} 
%
 Fix $\epsilon > 0$ and set $\epsilon_k = \epsilon/2^{k+1}$ for each integer $k \geqslant 0$. Beginning with a holomorphic map $f_0 \colon U \to X$ defined on a neighborhood $U$ of $\overline{\mathbb{D}}$, by Definiton \ref{def:vwoka1}, there exists a holomorphic map $f_1 \colon U_1 \to X$ defined on a neighborhood $U_1$ of $\overline{\mathbb{D}}_{2}$ such that $\sup_{z \in \overline{\mathbb{D}}} \mathrm{d}_X({f}_1(z), f_0(z)) \leqslant \epsilon_1$. 

Proceeding inductively, for each integer $k \geqslant 1$, suppose we have a holomorphic map $f_k \colon U_k \to X$ defined on a neighborhood $U_k$ of $\overline{\mathbb{D}}_{2^k}$. Applying Definition \ref{def:vwoka1} again, we obtain a holomorphic map $f_{k+1} \colon U_{k+1} \to X$ defined on a neighborhood $U_{k+1}$ of $\overline{\mathbb{D}}_{2^{k+1}}$ satisfying $\sup_{z \in \overline{\mathbb{D}}_{2^{k}}} \mathrm{d}_X({f}_{k+1}(z), f_k(z)) \leqslant \epsilon_{k+1}$. This iterative process yields a sequence of maps $\{f_k\}_{k\geqslant 0}$ whose limit exists from the error estimate. Denoted by such limit ${f}\colon \mathbb{C}\to X$. By our construction, there holds 
\[
    \sup_{z \in \overline{\mathbb{D}}} \mathrm{d}_X({f}(z), f_0(z)) \leqslant \sum\limits_{k\geqslant0}\sup\limits_{z \in \overline{\mathbb{D}}_{2^{k}}} \mathrm{d}_X({f}_{k+1}(z), f_k(z)) \leqslant \sum\limits_{k\geqslant0}\epsilon_{k+1}\leqslant\epsilon.
\]

\smallskip\noindent {\bf (iii) $\Longrightarrow$ (i):}
Let $D_1,D_2$ be disjoint closed discs in a larger closed disc $D\subset\mathbb{C}$. Let $F:U\to X$ be a holomorphic map from a neighborhood $U$ of $D_1\cup D_2$. Given an arbitrary $\epsilon > 0$, Lemma \ref{lem:dumbbell domain holomorphic map} yields a holomorphic map $G \colon S \to X$ on a dumbbell neighborhood $S$ of $D_1 \cup D_2$ such that,
\begin{equation}\label{eq:GF appro}
    \mathrm{d}_X\left(G(z),F(z)\right)\leqslant {\epsilon}/{3},\quad \text{for any $z\in {D}_1\cup {D}_2$}.
\end{equation}

Since $S$ is simply connected, there exists a conformal map $\phi:S\to \mathbb{D}$. We choose a sufficiently small $\delta>0$ such that $\phi(D_1 \cup D_2) \subset \overline{\mathbb{D}}_{1-2\delta}$ and, by applying Definition \ref{def:awoka1} to $G\circ \phi^{-1} \colon \overline{\mathbb{D}}_{1-\delta}\to X$, we obtain an entire holomorphic map $H:\mathbb{C}\to X$ satisfying
\begin{equation}\label{eq:GphiH appro}
    \mathrm{d}_X\left(H(w),G\circ \phi^{-1}(w)\right)\leqslant {\epsilon}/{3},\quad \text{for any $w\in \overline{\mathbb{D}}_{1-\delta}$}.
\end{equation}

Now choose a compact set $S^\prime \subset S$ containing $D_1\cup D_2$ with path-connected complement. By Runge's approximation theorem (Theorem \ref{thm:runge approximation}) and compactness argument, there exists a polynomial $\psi:\mathbb{C}\to \mathbb{C}$ approximating $\phi$ very closely on $S^\prime$ such that $\psi\left({D}_1\cup {D}_2\right)\subset {\mathbb{D}_{1-\delta}}$ and such that
\begin{equation}\label{eq:Gphi continuous}
    \mathrm{d}_X\left({G\circ \phi^{-1}}(\psi(z)),{G\circ \phi^{-1}}(\phi(z))\right)\leqslant {\epsilon}/{3}, \quad \text{for any $z\in {D}_1\cup {D}_2$}.
\end{equation}

Define $\widehat{F}(z)\coloneq H\circ  \psi(z)\colon\mathbb{C}\to X$. Summarizing estimates (\ref{eq:GF appro})--(\ref{eq:Gphi continuous}), we have, for any $z\in{D}_1\cup {D}_2$,
\begin{align*}
    \mathrm{d}_X\left(\widehat{F}(z),{F}(z)\right)\leqslant &\, \mathrm{d}_X\left(H\circ  \psi(z),G\circ \phi^{-1}\circ \psi(z)\right)+\mathrm{d}_X\left({G\circ \phi^{-1}}(\psi(z)),{G\circ \phi^{-1}}(\phi(z))\right)\\
    &+ \mathrm{d}_X\left(G(z),F(z)\right) \leqslant\, \epsilon.
\end{align*}
Thus we conclude the proof.
\end{proof}

\section{\bf Proof of the Main Theorem}\label{section:Ahlfors Currents}

\subsection{Technical preparations}

Our proof of the Main Theorem is based on a sophisticated improvement of the ``ping-pong" strategy introduced in~\cite{Xie24}.
First of all, we need the following key result.

\begin{lem}[{\cite[Observation 3.1]{Xie24}}] \label{lem-countable-discs}
There exists a countable family of nonconstant holomorphic discs  $\left\{f_i \colon \overline{\mathbb{D}}_{\rho_i} \to X \right\}_{i\geqslant 1}$,  holomorphically defined on  larger neighborhoods, which generates all Ahlfors  currents on \( X \). 
\end{lem}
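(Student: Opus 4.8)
The plan is to derive this from purely soft, functional-analytic considerations: the crucial point is that the space of Ahlfors currents on $X$, equipped with the weak topology of currents, is \emph{separable}. Indeed, every Ahlfors current is a closed positive bidimension-$(1,1)$ current $\mathcal{T}$ with $\mathcal{T}(\omega)=1$; all such currents lie in the set $\mathcal{M}$ of closed positive $(1,1)$-currents of mass $1$, which — since the space of smooth test forms on the compact manifold $X$ is separable and the mass-$1$ condition makes $\mathcal{M}$ weak-$*$ bounded — is a weak-$*$ compact, hence metrizable, space. I would fix once and for all a metric $\mathsf{d}$ inducing this topology on $\mathcal{M}$, say $\mathsf{d}(S,T)=\sum_{k\ge1}2^{-k}\min\{1,|S(\eta_k)-T(\eta_k)|\}$ for a sequence $\{\eta_k\}_{k\ge1}$ of smooth $(1,1)$-forms dense in the $\mathcal{C}^\infty$-topology. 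Writing $\mathcal{AC}(X)\subset\mathcal{M}$ for the set of all Ahlfors currents on $X$, separability of $\mathcal{M}$ forces separability of the subspace $\mathcal{AC}(X)$, so I may fix a sequence $\{\mathcal{T}_n\}_{n\ge1}\subset\mathcal{AC}(X)$ that is $\mathsf{d}$-dense in $\mathcal{AC}(X)$.

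For each $n\ge1$, unwinding the definition of ``Ahlfors current'' produces a sequence of nonconstant holomorphic discs $\{g_{n,j}\colon\overline{\mathbb{D}}_{\rho_{n,j}}\to X\}_{j\ge1}$, each holomorphic on a larger neighbourhood (per the paper's convention), whose length-area ratios tend to $0$ as $j\to\infty$ and for which $A_{g_{n,j}}\to\mathcal{T}_n$ weakly as $j\to\infty$. Enumerating the countable collection $\{g_{n,j}\}_{n,j\ge1}$ as a single sequence $\{f_i\}_{i\ge1}$ gives the family claimed in the lemma.

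It then remains to check that $\{f_i\}_{i\ge1}$ generates every Ahlfors current. Given $\mathcal{T}\in\mathcal{AC}(X)$, I would use $\mathsf{d}$-density to select indices $n_1,n_2,\dots$ with $\mathsf{d}(\mathcal{T}_{n_k},\mathcal{T})<1/k$, and then, for each $k$, an index $j_k$ — chosen with $j_1<j_2<\cdots$ so that the discs $g_{n_k,j_k}$ are pairwise distinct and therefore constitute a genuine subsequence of $\{f_i\}_{i\ge1}$ — such that both $\mathsf{d}(A_{g_{n_k,j_k}},\mathcal{T}_{n_k})<1/k$ and the length-area ratio of $g_{n_k,j_k}$ is $<1/k$. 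The resulting subsequence satisfies the length-area condition~\eqref{eq:length-area-Ahlfors}, and $\mathsf{d}(A_{g_{n_k,j_k}},\mathcal{T})<2/k\to0$ by the triangle inequality, so $A_{g_{n_k,j_k}}\to\mathcal{T}$ weakly; hence $\mathcal{T}$ is generated by $\{f_i\}_{i\ge1}$.

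There is no genuinely hard step here — the statement is entirely ``soft''. The only points that need a little care are (a) recording that $\mathcal{M}$ is metrizable, which hinges on the separability of the space of smooth forms on the compact manifold $X$ (so that a ``countable dense subset'' is meaningful), and (b) the diagonal bookkeeping in the last paragraph, namely arranging that the selected discs $g_{n_k,j_k}$ really form an infinite subsequence of $\{f_i\}_{i\ge1}$, which is handled by insisting that $j_k$ be strictly increasing.
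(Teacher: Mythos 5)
Your argument is correct, and it is the natural soft separability argument one expects for a statement the cited source calls an ``Observation'' (the present paper simply cites \cite{Xie24} and gives no proof of its own, so there is nothing to compare against literally). All the pieces are in order: $\mathcal{M}$ is a weak-$*$ compact metrizable subset of the dual of the separable Fr\'echet space of smooth $(1,1)$-forms on the compact manifold $X$, so any subset of it, in particular $\mathcal{AC}(X)$, is separable; unwinding the definition of Ahlfors current gives, for each term of a dense sequence $\{\mathcal{T}_n\}\subset\mathcal{AC}(X)$, an approximating sequence of discs with length-area ratios tending to $0$; and the diagonal selection yields both weak convergence to any target $\mathcal{T}$ and the length-area condition. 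The only point worth tidying up is your remark that choosing $j_1<j_2<\cdots$ makes $g_{n_k,j_k}$ ``a genuine subsequence of $\{f_i\}$'': distinctness of the pairs $(n_k,j_k)$ gives distinct indices $i_k$ under the chosen enumeration, but these $i_k$ need not be increasing. This is harmless, because reindexing the set $\{i_k\}_{k\geqslant1}$ in increasing order is a rearrangement, and a rearrangement of a sequence converging to $\mathcal{T}$ with length-area ratios tending to $0$ retains both properties; one should just say so. A marginally leaner variant of the same idea avoids introducing $\{\mathcal{T}_n\}$ altogether by taking a countable dense subset of the set of pairs $\{(A_f,\ \mathrm{LA\text{-}ratio\ of\ } f)\}\subset\mathcal{M}\times(0,\infty)$ and approximating the pair $(\mathcal{T},0)$ directly, but this is a matter of taste, not substance.
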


Another key ingredient in our construction  is the following lemma.

\begin{lem}\label{lem:dumbbell domain convergence}
    For each $k\geqslant 1$, let $\{{S}_{k}=\mathbb{D}\cup H_{k}\cup {D}\}_{k\geqslant1}$ be a shrinking sequence of  dumbbell domains, where $D$ is an open disk away  from $\mathbb{D}$, and where $\left\{H_k\right\}_{k\geqslant1}$ is a shrinking sequence of connecting  strips whose width decrease to $0$ as $k \to \infty$ (see Figure \ref{fig:dumbell domain convergence}). Let $\{{\phi}_{k} \colon \mathbb{D}\to  {S}_{k}\}_{k\geqslant 1}$ be a sequence of normalized conformal mappings with ${\phi}_{k}(0)=0$ and ${\phi}_{k}^\prime(0)>0$ for each $k$. Then  $\left\{{\phi}_{k}\right\}_{k\geqslant 1}$ converges to the identity map uniformly on compact set in $\mathbb{D}$ as $k \to \infty$.
\end{lem}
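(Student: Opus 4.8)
The plan is to deduce this from the classical Carath\'eodory kernel theorem. Recall that if $\{G_k\}_{k\geqslant1}$ is a sequence of simply connected proper subdomains of $\mathbb{C}$, each containing a fixed disk centred at the origin, and $\psi_k\colon\mathbb{D}\to G_k$ are the Riemann maps normalized by $\psi_k(0)=0$ and $\psi_k'(0)>0$, then $\{\psi_k\}$ converges uniformly on compact subsets of $\mathbb{D}$ if and only if the $G_k$ converge, in the sense of Carath\'eodory, to their kernel $G$ at the origin (that is, every subsequence of $\{G_k\}$ has the same kernel $G$ at $0$); in that case the limit is the normalized Riemann map of $G$. Here the kernel of $\{G_k\}$ at $0$ is the largest domain $G\ni 0$ with the property that every compact subset of $G$ is contained in all but finitely many of the $G_k$. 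Each dumbbell domain $S_k$ is a bounded, simply connected open set containing $\mathbb{D}\ni 0$, and the $\phi_k$ are precisely the corresponding normalized Riemann maps, so the theorem applies, and the statement reduces to computing this kernel.

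So it suffices to show that the Carath\'eodory kernel of $\{S_k\}$ at $0$ — and, more generally, of every subsequence — equals $\mathbb{D}$. One inclusion is immediate: $\mathbb{D}$ is a domain containing $0$ with $\mathbb{D}\subset S_k$ for every $k$, hence $\mathbb{D}$ lies in the kernel. For the reverse inclusion I would argue by contradiction. Suppose the kernel $G$ contains a point $q\in G\setminus\mathbb{D}$; since $G$ is open we may, perturbing $q$, assume $q\notin\overline{\mathbb{D}}$. As $G$ is open and connected, join $0$ to $q$ by a path in $G$ and, using openness of $G$, thicken a compact piece of it to obtain a compact connected set $T\subset G$ joining $0\in\mathbb{D}$ to $q\notin\overline{\mathbb{D}}$. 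By definition of the kernel, $T\subset S_k$ for all large $k$. On the other hand, $\overline{\mathbb{D}}$ and $\overline{D}$ lie at positive distance, so the part of any $S_k$ lying outside $\mathbb{D}\cup D$ is contained in the connecting strip $H_k$, whose width tends to $0$. But $T$, being a fixed compact set joining $\mathbb{D}$ to a point outside $\overline{\mathbb{D}}$, contains — depending on whether or not $q\in D$ — either a fixed closed ball disjoint from $\overline{\mathbb{D}}\cup\overline{D}$, or a fixed-width sub-tube crossing the gap between $\overline{\mathbb{D}}$ and $\overline{D}$; in either case a subset of some positive width $\eta$ must be contained in $H_k$, which is impossible once $\mathrm{width}(H_k)<\eta$. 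Since the widths $\mathrm{width}(H_k)$ are monotonically decreasing to $0$, this argument applies verbatim to every subsequence of $\{S_k\}$, so $\{S_k\}$ converges to its kernel $\mathbb{D}$ in the Carath\'eodory sense.

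With the kernel identified as $\mathbb{D}$, the Carath\'eodory kernel theorem gives that $\phi_k$ converges, uniformly on compact subsets of $\mathbb{D}$, to the normalized Riemann map of $\mathbb{D}$ onto itself, namely the identity, which is exactly the desired conclusion.

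I expect the only genuine content to be the kernel computation, and within it the exclusion of points outside $\mathbb{D}$ from the kernel — precisely the step that uses the hypothesis that the strip widths tend to $0$; everything else is a direct appeal to the classical theorem or the trivial observation that $\mathbb{D}\subseteq S_k$ for all $k$. (A self-contained alternative is to note that the $\phi_k$ form a normal family by uniform boundedness and to analyze subsequential limits directly via the Schwarz lemma applied to $\phi_k^{-1}|_{\mathbb{D}}$ and Hurwitz's theorem, but this essentially re-proves the relevant case of the kernel theorem and is no shorter.)
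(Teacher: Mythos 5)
Your proof is correct and takes the same route the paper indicates: in the main text the lemma is justified by a one-line citation of the Carath\'eodory kernel convergence theorem, and you usefully supply the kernel computation (identifying the kernel as $\mathbb{D}$ and ruling out anything larger via the vanishing strip width, uniformly over subsequences) that the paper leaves implicit. The self-contained normal-families argument you mention as an alternative at the end is exactly what the paper carries out in its appendix (normality of $\{\phi_k\}$, Hurwitz, a lower bound on $|\phi_k'(0)|$ via $\phi_k^{-1}$, and identification of the limit with the identity).
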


\begin{figure}[!htbp]
    \centering
    \includegraphics[scale=0.55]{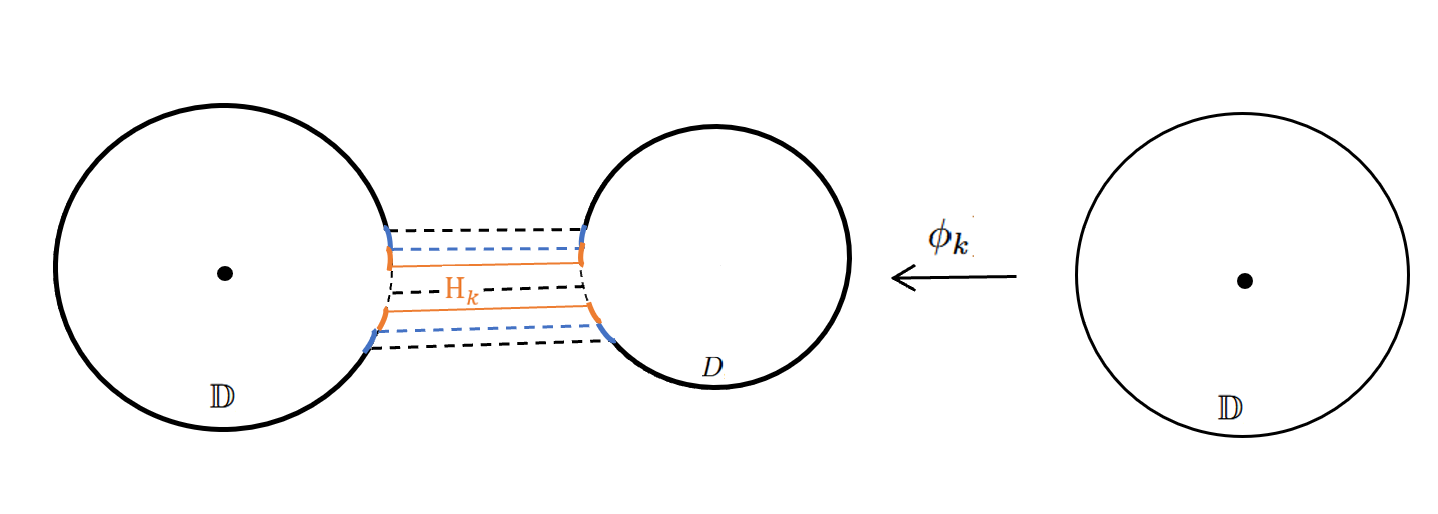}
    \caption{Dumbbell domain convergence.}
    \label{fig:dumbell domain convergence}
\end{figure}

This lemma follows directly from the Carathéodory kernel convergence theorem (cf. \cite[Theorem 1.8]{Pom92}). 
It plays a key role in our “ping-pong” strategy (see {\bf Trick 4}), a term motivated by the configuration in Figure~\ref{fig:dumbell domain convergence}, where the two discs resemble a ping-pong ball on opposite sides of a table.

\subsection{A Novel Ping-Pong Induction Process for Entire Curve Construction}


Let $\{g_j \colon \overline{\mathbb{D}}_{r_j} \to X\}_{j \geqslant 1}$ be a sequence of nonconstant holomorphic discs that contains every $f_i$ in Lemma~\ref{lem-countable-discs} infinitely often.

Our objective is to ``amalgamate'', in a certain sense to be specified later, the family of holomorphic discs $\{g_j\}_{j \geqslant 1}$  into a sequence of concentric discs arising from one entire curve.

\subsection*{Base Case ($n = 1$)}
Fix a countable dense subset $\{\eta_j\}_{j=1}^\infty$ of $\mathcal{A}^{1,1}(X)$, the space of smooth $(1,1)$-forms on $X$ endowed with the standard Fr\'echet topology. We initialize the construction by setting:
\[
R_1 \coloneqq r_1, \quad \epsilon_1 \coloneqq 1, \quad
F_1 \coloneqq g_1 \colon \overline{\mathbb{D}}_{R_1} \to X.
\]

\subsection*{Inductive Step: From $n$ to $n+1$}
We now proceed with the inductive construction of $F_{n+1}$ from $F_n$. The input is the holomorphic disc $F_n \colon U_n \to X$, defined on a neighborhood $U_n$ of $\overline{\mathbb{D}}_{R_n}$, which was obtained in the previous step. We fix an auxiliary relatively compact disc $U'_n \Subset U_n$ containing $\overline{\mathbb{D}}_{R_n}$.

\smallskip

\subsection*{Trick 1: Set an Negligible Error Bound}

We choose a sufficiently small error bound $\epsilon_{n+1}>0$ satisfying
\begin{equation}
    \label{epsilon'n+1 condition}
     \epsilon_{n+1} < \epsilon_n / 2,
\end{equation}
such that for any holomorphic map $f \colon U_n' \to X$ approximating $F_n$ closely on $U_n'$:
\[
\sup_{z \in U'_n} \mathrm{d}_X \left( f(z), F_n(z) \right) < \epsilon_{n+1},
\]
the following two estimates hold:
\begin{equation}
    \label{epsilon' condition1}
    \left| \frac{\operatorname{Length}_\omega f(\partial \mathbb{D}_{R_n})}{\operatorname{Area}_\omega f(\mathbb{D}_{R_n})} 
    - \frac{\operatorname{Length}_\omega F_n(\partial \mathbb{D}_{R_n})}{\operatorname{Area}_\omega F_n(\mathbb{D}_{R_n})} \right| < 2^{-n},
\end{equation}
and for all $j = 1, \dots, n$,
\begin{equation}
    \label{epsilon' condition2}
    \left| \frac{\int_{\mathbb{D}_{R_n}} f^* \eta_j}{\operatorname{Area}_\omega(f(\mathbb{D}_{R_n}))} 
    - \frac{\int_{\mathbb{D}_{R_n}} F_n^* \eta_j}{\operatorname{Area}_\omega(F_n(\mathbb{D}_{R_n}))} \right| < 2^{-n}.
\end{equation}

The existence of such $\epsilon_{n+1}$ can be established by a reductio ad absurdum argument. 
Indeed, $\mathcal{C}^0$-approximation of $F_n$ by $f$ on the larger domain $U_n' \supset \overline{\mathbb{D}}_{R_n}$ 
implies $\mathcal{C}^1$-approximation on the compact subset $\overline{\mathbb{D}}_{R_n}$ via Cauchy's estimates. 
On the other hand, when $f$ is sufficiently close to $F_n$ in the $\mathcal{C}^1$-norm on $\overline{\mathbb{D}}_{R_n}$, 
the left-hand sides of  inequalities~\eqref{epsilon' condition1} and~\eqref{epsilon' condition2} will automatically be close to zero.

\smallskip
Such an $\epsilon$-trick also played a crucial role in~\cite{Xie24}. It enables us to “extend” holomorphic maps from
smaller to larger domains (up to negligible errors) by directly applying Runge’s approximation theorem,
while preserving the desired estimates.
\smallskip

\subsection*{Trick 2: Double the Disc Radius}

Set $R_{n+1}\coloneqq 2R_n$.
Applying the local Runge approximation  property of \(X\) on discs (Definition~\ref{def:vwoka1}),  we can find some open neighborhood \(V_{n+1}\) of \(\overline{\mathbb{D}}_{R_{n+1}}\) and a holomorphic map \(G_{n+1}\colon V_{n+1} \to X\) satisfying
\begin{equation}\label{eq:weak oka 1 in substep 1}
    \sup_{z \in {U}'_{n}} \mathrm{d}_X\bigl(F_n(z), G_{n+1}(z)\bigr) < \epsilon_{n+1}/8.
\end{equation}
This ``extension'' requirement is, as a matter of fact, the motivation behind Definition \ref{def:vwoka1}.

\smallskip

\subsection*{Trick 3: Bridge the Two Discs by an Almost Geodesic to a Dumbbell Domain}

To simplify notations, we normalize $g_{n+1}$ to a unit holomorphic disc by setting
\[
\widehat{g}_{n+1}(z) := g_{n+1}(r_{n+1} \cdot z) \colon \overline{\mathbb{D}} \to X.
\]
Note that this normalization, introduced purely for expository convenience, preserves the associated Ahlfors-type current, i.e., $A_{\widehat{g}_{n+1}} = A_{g_{n+1}}$ by~\eqref{eq:current def}.

Choose a point $c_{n+1} \in \mathbb{R}_+$ on the positive real axis sufficiently far from the origin such that
\[
\overline{\mathbb{D}}_{2R_n} \cap \overline{\mathbb{D}}(c_{n+1}, 1) = \emptyset.
\]
Let $L_{n+1} \subset \mathbb{R}$ be the segment on the real axis connecting $\overline{\mathbb{D}}_{R_{n+1}}=\overline{\mathbb{D}}_{2R_{n}}$ and $\overline{\mathbb{D}}(c_{n+1}, 1)$, with endpoints $q_{n+1} = R_{n+1}$ and $p_{n+1} = c_{n+1} - 1$. Define the  compact set
\[
K_{n+1} = \overline{\mathbb{D}}_{R_{n+1}}\cup L_{n+1} \cup \overline{\mathbb{D}}(c_{n+1}, 1),
\]
which is admissible by Definition~\ref{def:admissible-set}.

For every integer $k \geqslant 1$, define the map
\[
\widetilde{g}_{n+1,k}(z) := \widehat{g}_{n+1}\big((z - c_{n+1})^{k}\big) \colon \overline{\mathbb{D}}(c_{n+1}, 1) \to X.
\]
Observe that the associated Ahlfors-type currents and the length-area ratios remain unchanged:
\begin{equation}
    \label{Current is independent of k}
    A_{\widetilde{g}_{n+1,k}} = A_{g_{n+1}},
    \,\,\,
    \frac{\operatorname{Length}_\omega \widetilde{g}_{n+1, k}(\partial \mathbb{D}(c_{n+1}, 1))}{\operatorname{Area}_\omega \widetilde{g}_{n+1,k}(\mathbb{D}(c_{n+1}, 1))}
    =
    \frac{\operatorname{Length}_\omega g_{n+1}(\partial \mathbb{D}_{r_{n+1}})}{\operatorname{Area}_\omega g_{n+1}(\mathbb{D}_{r_{n+1}})},
    \quad \forall\, k \geqslant 1.
\end{equation}
This is because the identity 
\begin{equation}
    \label{naive identity 1}
 \frac{k \cdot M}{k \cdot N}=\frac{M}{N},
\quad\forall\,
M, N \in \mathbb{C},\,N \neq 0,\,k \neq 0,
\end{equation}
which directly reflects in the computations.

By Propositions~\ref{almost geodesic link} and~\ref{prop:extension-admissible} and Lemma~\ref{lem:dumbbell domain holomorphic map}, we can conclude that, 
for every $k \geqslant 1$ there exists a 
map $\chi_{n+1,k} \in \mathcal{A}^1(K_{n+1}, X)$  linking $G_{n+1}$ and $\widetilde{g}_{n+1,k}$ in the sense:
\begin{equation}\label{eq:chi=G}
    \chi_{n+1,k}\big|_{\overline{\mathbb{D}}_{R_{n+1}}} = G_{n+1}, \quad \text{and} \quad \chi_{n+1,k}\big|_{\overline{\mathbb{D}}(c_{n+1},1)} = \widetilde{g}_{n+1,k},
\end{equation}
and such that the lengths of the connecting paths $\chi_{n+1,k}(L_{n+1})$ are uniformly bounded:
\begin{equation}
    \label{key bounded lengths}
\operatorname{Length}_\omega \left( \chi_{n+1,k}(L_{n+1}) \right) \leqslant \mathrm{diam}_{\omega}(X)+1=O(1), \qquad \forall\, k \geqslant 1,
\end{equation}
where $\mathrm{diam}_{\omega}(X)$ is the diameter of $X$ with respect to  $\omega$.
  Geometrically, $K_{n+1}$ looks like a dumbbell-shaped domain with an infinitely thin neck $L_{n+1}$. Its boundary is therefore understood as
\[
\partial K_{n+1} = \partial \mathbb{D}_{R_{n+1}} + \partial\mathbb{D}(c_{n+1}, 1) + 2 L_{n+1},
\]
where the segment $L_{n+1}$ is counted twice for good reasons that will become apparent shortly (see~\eqref{eq:length-area-estimate of K_n-2}).

\smallskip
We choose a sufficiently large integer $k \gg 1$ such that the following two estimates hold:
\begin{equation}\label{eq:length-area-estimate of K_n}
    \left| \frac{\operatorname{Length}_\omega \chi_{n+1,k}(\partial K_{n+1})}{\operatorname{Area}_\omega \chi_{n+1,k}(K_{n+1})} 
    - \frac{\operatorname{Length}_\omega g_{n+1}(\partial \mathbb{D}_{r_{n+1}})}{\operatorname{Area}_\omega g_{n+1}(\mathbb{D}_{r_{n+1}})} \right| < 2^{-(n+1)},
\end{equation}
\begin{equation}\label{eq:limit curve convergence2}
    \left| \frac{\int_{K_{n+1}} \chi_{n+1,k}^* \eta_j}{\operatorname{Area}_\omega(\chi_{n+1,k}(K_{n+1}))} 
    - \frac{\int_{\mathbb{D}_{r_{n+1}}} g_{n+1}^* \eta_j}{\operatorname{Area}_\omega(g_{n+1}(\mathbb{D}_{r_{n+1}}))} \right| < 2^{-(n+1)},\quad j = 1, 2, \dots, n+1.
\end{equation}
Such a choice of \(k\) is possible because, as \(k \) tends to infinity, the left-hand sides of both the above inequalities tend to zero. This follows from~\eqref{Current is independent of k}, \eqref{key bounded lengths}, and the asymptotic identity
\begin{equation}
    \label{naive identity 2}
\lim_{k\to \infty} \frac{k \cdot M + O(1)}{k \cdot N + O(1)} = \frac{M}{N},
\quad \forall\, M, N \in \mathbb{C},\ N \neq 0,
\end{equation}
where each occurrence of \(O(1)\) denotes a complex number with  modulus  bounded from above by a positive number independent of $k$, though these bounded numbers \(O(1)\) may differ in different instances.


 \smallskip
 
\subsection*{Trick 4: Round the Dumbbell via Riemann Mapping}

Since $\chi_{n+1, k} \in \mathcal{A}^1(K_{n+1}, X)$, by a gluing technique of Fornaess and Stout~\cite{FornaessStout77a, FornaessStout77b},
 or by applying Theorem~\ref{thm:mergelyan-manifold-valued}, we can find a holomorphic map $h_{n+1, k}$, defined on a neighborhood $S_{n+1, k}$ of $K_{n+1}$, that approximates $\chi_{n+1, k}$ very closely in the $\mathcal{C}^1$-norm on $K_{n+1}$, such that in particular
\begin{equation}
    \label{h approximates chi}
    \sup_{z \in U'_{n}} \mathrm{d}_X \left(h_{n+1, k}(z),  \chi_{n+1, k}(z) \right)
    =
     \sup_{z \in U'_{n}} \mathrm{d}_X \left(h_{n+1, k}(z),  G_{n+1}(z) \right)
    < \epsilon_{n+1}/8,
\end{equation}
and such that the estimates~\eqref{eq:length-area-estimate of K_n} and~\eqref{eq:limit curve convergence2} remain valid when $\chi_{n+1, k}$ is replaced by $h_{n+1, k}$:
\begin{equation}\label{eq:length-area-estimate of K_n-new}
    \left| \frac{\operatorname{Length}_\omega h_{n+1,k}(\partial K_{n+1})}{\operatorname{Area}_\omega h_{n+1,k}(K_{n+1})} 
    - \frac{\operatorname{Length}_\omega g_{n+1}(\partial \mathbb{D}_{r_{n+1}})}{\operatorname{Area}_\omega g_{n+1}(\mathbb{D}_{r_{n+1}})} \right| < 2^{-(n+1)},
\end{equation}
\begin{equation}\label{eq:limit curve convergence2-new}
    \left| \frac{\int_{K_{n+1}} h_{n+1,k}^* \eta_j}{\operatorname{Area}_\omega(h_{n+1,k}(K_{n+1}))} 
    - \frac{\int_{\mathbb{D}_{r_{n+1}}} g_{n+1}^* \eta_j}{\operatorname{Area}_\omega(g_{n+1}(\mathbb{D}_{r_{n+1}}))} \right| < 2^{-(n+1)},\quad j = 1, 2, \dots, n+1.
\end{equation}

We then select a sufficiently small dumbbell-shaped domain $H_{n+1,k} \Subset S_{n+1,k}$ that ``almost contains'' $K_{n+1}$, 
composed of the interior $\mathring{K}_{n+1} = \mathbb{D}_{R_{n+1}} \cup \mathbb{D}(c_{n+1}, 1)$ 
together with a very thin neck region that runs parallel to and fully contains $L_{n+1}$, 
such that the following inequalities are preserved:
\begin{equation}\label{eq:length-area-estimate of K_n-2}
    \left| \frac{\operatorname{Length}_\omega h_{n+1,k}(\partial H_{n+1, k})}{\operatorname{Area}_\omega h_{n+1,k}(H_{n+1, k})} 
    - \frac{\operatorname{Length}_\omega g_{n+1}(\partial \mathbb{D}_{r_{n+1}})}{\operatorname{Area}_\omega g_{n+1}(\mathbb{D}_{r_{n+1}})} \right| < 2^{-(n+1)},
\end{equation}
\begin{equation}\label{eq:limit curve convergence2-2}
    \left| \frac{\int_{H_{n+1,k}} h_{n+1,k}^* \eta_j}{\operatorname{Area}_\omega(h_{n+1,k}(H_{n+1, k}))} 
    - \frac{\int_{\mathbb{D}_{r_{n+1}}} g_{n+1}^* \eta_j}{\operatorname{Area}_\omega(g_{n+1}(\mathbb{D}_{r_{n+1}}))} \right| < 2^{-(n+1)}, 
    \quad j = 1, 2, \dots, n+1.
\end{equation}
Such a domain $H_{n+1,k}$ exists because, as the width of the neck tends to zero, the left-hand sides of both the above inequalities converge to the corresponding left-hand sides in~\eqref{eq:length-area-estimate of K_n-new} and~\eqref{eq:limit curve convergence2-new}, respectively.

\begin{figure}[!htbp]
    \centering
    \includegraphics[scale=0.6]{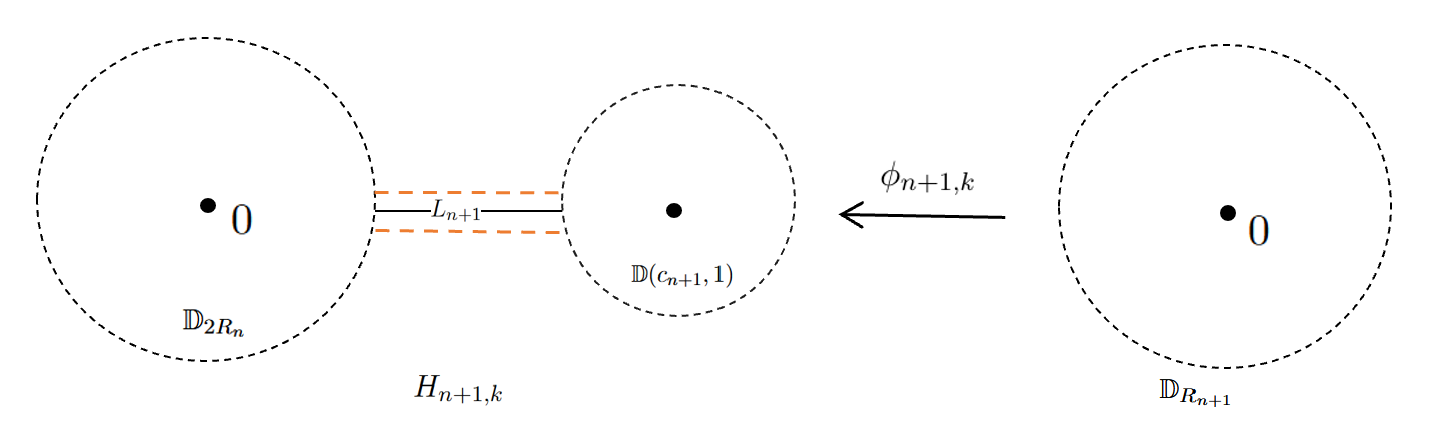}
    \caption{Conformal map from dumbbell domain to disc.}
    \label{Figure3}
\end{figure}

Furthermore, by  shrinking the neck region if necessary, Lemma~\ref{lem:dumbbell domain convergence} ensures that the holomorphic map $$\mu_{n+1} \coloneqq h_{n+1, k} \circ \phi_{n+1, k} \colon \mathbb{D}_{R_{n+1}}\rightarrow X$$ approximates  $h_{n+1, k}$ closely on $U_n'$ with:
\begin{equation}\label{G_n+1 approximates F_n}
  \sup_{z \in U'_{n}} \mathrm{d}_X \left( \mu_{n+1}(z), h_{n+1, k}(z) \right) < {\epsilon_{n+1}}/{8},
\end{equation}
where $\phi_{n+1, k} \colon \mathbb{D}_{R_{n+1}}\to H_{n+1, k} $ denotes the unique conformal mapping normalized by the conditions $\phi_{n+1, k}(0) = 0$ and $\phi_{n+1, k}'(0) > 0$; see Figure~\ref{Figure3}.

Therefore the inequalities~\eqref{eq:length-area-estimate of K_n-2} and~\eqref{eq:limit curve convergence2-2} read as \begin{equation}\label{eq:length-area-estimate-G}
    \left| \frac{\operatorname{Length}_\omega \mu_{n+1}(\partial \mathbb{D}_{R_{n+1}})}{\operatorname{Area}_\omega \mu_{n+1}(\mathbb{D}_{R_{n+1}})} 
    - \frac{\operatorname{Length}_\omega g_{n+1}(\partial \mathbb{D}_{r_{n+1}})}{\operatorname{Area}_\omega g_{n+1}(\mathbb{D}_{r_{n+1}})} \right| < 2^{-(n+1)},
\end{equation}
\begin{equation}\label{eq:cohomology-convergence-G}
    \left| \frac{\int_{\mathbb{D}_{R_{n+1}}} \mu_{n+1}^* \eta_j}{\operatorname{Area}_\omega(\mu_{n+1}(\mathbb{D}_{R_{n+1}}))} 
    - \frac{\int_{\mathbb{D}_{r_{n+1}}} g_{n+1}^* \eta_j}{\operatorname{Area}_\omega(g_{n+1}(\mathbb{D}_{r_{n+1}}))} \right| < 2^{-(n+1)}, 
    \quad j = 1, 2, \dots, n+1.
\end{equation}

 This trick, in fact, forms the cornerstone of our novel approach. It provides an appropriate adaptation of Birkhoff's classical technique~\cite{Birkhoff29} for the specific purpose of ``amalgamating" Ahlfors-type currents in our setting.
 
\subsection*{Trick 5: Slightly Enlarge the Disc}

Take a very small $0 < \delta_{n+1} \ll 1$ such that the holomorphic map
\[
F_{n+1}(z) \coloneqq \mu_{n+1}((1 - \delta_{n+1}) \cdot z),
\]
defined on a neighborhood $U_{n+1}$ of $\overline{\mathbb{D}}_{R_{n+1}}$, closely approximates $\mu_{n+1}$ on  $U_n' \Subset \mathbb{D}_{R_{n+1}}$:
\begin{equation}
    \label{F_n+1 approximates G_n+1}
\sup_{z \in U'_n} \mathrm{d}_X \left( F_{n+1}(z), \mu_{n+1}(z) \right) < \epsilon_{n+1}/8,
\end{equation}
and such that,
the estimates~\eqref{eq:length-area-estimate-G} and~\eqref{eq:cohomology-convergence-G} maintain after replacing $\mu_{n+1}$ by $F_{n+1}$:
\begin{equation}\label{eq:length-area-estimate-F}
    \left| \frac{\operatorname{Length}_\omega F_{n+1}(\partial \mathbb{D}_{R_{n+1}})}{\operatorname{Area}_\omega F_{n+1}(\mathbb{D}_{R_{n+1}})} 
    - \frac{\operatorname{Length}_\omega g_{n+1}(\partial \mathbb{D}_{r_{n+1}})}{\operatorname{Area}_\omega g_{n+1}(\mathbb{D}_{r_{n+1}})} \right| < 2^{-(n+1)},
\end{equation}
\begin{equation}\label{eq:cohomology-convergence-F}
    \left| \frac{\int_{\mathbb{D}_{R_{n+1}}} F_{n+1}^* \eta_j}{\operatorname{Area}_\omega(F_{n+1}(\mathbb{D}_{R_{n+1}}))} 
    - \frac{\int_{\mathbb{D}_{r_{n+1}}} g_{n+1}^* \eta_j}{\operatorname{Area}_\omega(g_{n+1}(\mathbb{D}_{r_{n+1}}))} \right| < 2^{-(n+1)}, 
    \quad j = 1, 2, \dots, n+1.
\end{equation}
The argument is based on continuity: as $\delta_{n+1}$ tends to zero, the left-hand sides of the two inequalities above converge to their counterparts in~\eqref{eq:length-area-estimate-G} and~\eqref{eq:cohomology-convergence-G}, respectively. Notably, the purpose of this scaling trick is precisely to ensure that $F_{n+1}$ is defined on a slightly larger domain than $\overline{\mathbb{D}}_{R_{n+1}}$, while preserving the required estimates.

This completes the ``amalgamation'' of $g_{n+1}$ into $F_{n+1}$.

\smallskip
Combining~\eqref{eq:weak oka 1 in substep 1}, \eqref{h approximates chi}, \eqref{G_n+1 approximates F_n} and~\eqref{F_n+1 approximates G_n+1}, we conclude that
\begin{equation}\label{F_n+1 approximates F_n}
\sup_{z \in U'_n} \mathrm{d}_X \left( F_{n+1}(z), F_n(z) \right) < \epsilon_{n+1}/2.
\end{equation}
 This completes the inductive construction at {\em Step} $n+1$.

\subsection{Final Construction of the Entire Curve}

By the inductive estimates~\eqref{epsilon'n+1 condition} and~\eqref{F_n+1 approximates F_n}, our obtained sequence $\{F_n\colon \overline{\mathbb{D}}_{R_n}\rightarrow X\}_{n\geqslant 1}$ of holomorphic discs converges uniformly on compact sets to an entire curve $F\colon \mathbb{C}\rightarrow X$. Indeed, for any $n\geqslant 1$ and $z \in U'_n$, we have:
\begin{align*}
    \mathrm{d}_X(F(z),F_{n}(z)) \leqslant \sum_{m\geqslant n} \mathrm{d}_X(F_{m+1}(z),F_{m}(z)) < \sum_{m\geqslant n} \frac{\epsilon_{m+1}}{2} < \sum_{m\geqslant n} \frac{\epsilon_{n+1}}{2^{m-n+1}} = \epsilon_{n+1}.
\end{align*}

This uniform  estimate ensures that $F$ inherits the geometric estimates from the approximating sequence. Specifically, \textbf{Trick 1} ensures the following proximity conditions:
\begin{equation}
    \label{F epsilon' condition1}
    \left| \frac{\operatorname{Length}_\omega F(\partial \mathbb{D}_{R_n})}{\operatorname{Area}_\omega F(\mathbb{D}_{R_n})} 
    - \frac{\operatorname{Length}_\omega F_n(\partial \mathbb{D}_{R_n})}{\operatorname{Area}_\omega F_n(\mathbb{D}_{R_n})} \right| < 2^{-n},
\end{equation}
and for all $j = 1, \dots, n$:
\begin{equation}
    \label{F-epsilon' condition2}
    \left| \frac{\int_{\mathbb{D}_{R_n}} F^* \eta_j}{\operatorname{Area}_\omega(F(\mathbb{D}_{R_n}))} 
    - \frac{\int_{\mathbb{D}_{R_n}} F_n^* \eta_j}{\operatorname{Area}_\omega(F_n(\mathbb{D}_{R_n}))} \right| < 2^{-n}.
\end{equation}

Now, let $\mathcal{T}$ be an Ahlfors current on $X$. By the construction of the sequence $\{g_j\}_{j\geqslant 1}$ (where each $f_i$ from Lemma~\ref{lem-countable-discs} appears infinitely often), there exists a subsequence $\{g_{n_j}\}_{j\geqslant 1}$ satisfying the length-area condition~\eqref{eq:length-area-Ahlfors} that generates $\mathcal{T}$.

\medskip\noindent {\bf Claim.} $\mathcal{T}$ is also generated by the concentric holomorphic discs $\{F|_{\overline{\mathbb{D}}_{R_{n_j}}}\}_{j\geqslant 1}$.

\begin{proof}
This sequence
clearly satisfies the length-area condition~\eqref{eq:length-area-Ahlfors}  by the estimates~\eqref{eq:length-area-estimate-F} and ~\eqref{F epsilon' condition1}. 
For any $\eta_k$ in our fixed dense subset $\{\eta_k\}_{k=1}^\infty $ of $\mathcal{A}^{1,1}(X)$, the inequalities~\eqref{eq:cohomology-convergence-F} and~\eqref{F-epsilon' condition2} guarantee
\[
\lim_{j\to \infty}
\frac{\int_{\mathbb{D}_{R_{n_j}}} F^* \eta_k}{\operatorname{Area}_\omega(F(\mathbb{D}_{R_{n_j}}))} =
\lim_{j\to \infty}
\frac{\int_{\mathbb{D}_{R_{n_j}}} F_{n_j}^* \eta_k}{\operatorname{Area}_\omega(F_{n_j}(\mathbb{D}_{R_{n_j}}))}
=
\lim_{j\to \infty}
\frac{\int_{\mathbb{D}_{r_{n_j}}} g_{n_j}^* \eta_k}{\operatorname{Area}_\omega(g_{n_j}(\mathbb{D}_{r_{n_j}}))}
=
\mathcal{T}(\eta_k).
\]
Therefore the sequence $\{F|_{\overline{\mathbb{D}}_{R_{n_j}}}\}_{j\geqslant 1}$ indeed generates $\mathcal{T}$ by the denseness of $\{\eta_k\}_{k\geqslant 1}$.
\end{proof}

\section{\bf Open Problems}
\label{section:4}

To conclude, we would like to emphasize the following challenges in the study of (weak) Oka-1 manifolds.

\begin{ques}
\label{question-4.1}\rm 
Let \(X\) be a compact complex manifold with the Runge approximation property on discs. Does there exist an entire curve \(f \colon \mathbb{C} \to X\) such that the  concentric holomorphic discs \(\{f|_{\overline{\mathbb{D}}_r}\}_{r>0}\) generate all Nevanlinna currents on \(X\)?
\end{ques}

 The main difficulty stems from the fact that the definition of Nevanlinna currents (cf., e.g.~\cite{WuXie25}) involves logarithmic integration $\tfrac{\mathrm{d}t}{t}$ rather than the linear measure $\mathrm{d}t$, which makes a direct adaptation of our method impossible. If the answer is affirmative, this would establish the natural analogue of Sibony's Conjecture~\ref{conj:sibony} for Nevanlinna currents.

\begin{defi}[Oka-1 manifold~\cite{AlarconForstneric23}]\label{def:Oka1}\rm
A connected complex manifold \( X \) with a complete distance function $\mathrm{d}_X$ is \emph{Oka-1} if for any open Riemann surface \( R \), compact Runge set \( K \subset R \), discrete sequence \( \{a_i\}_{i\geqslant 1}\) in $R$, continuous map \( f \colon R \to X \) holomorphic near \( K \cup \{a_i\}_{i\geqslant 1} \), \( \epsilon > 0 \), and \( k_i \in \mathbb{N} \), there exists a holomorphic map \( F \colon R \to X \) homotopic to \( f \) such that:
\begin{enumerate}
    \item \(\sup_{p \in K} \mathrm{d}_X(F(p), f(p)) < \epsilon\),
    \item \(F\) agrees with \(f\) to order \(k_i\) at each \(a_i\).
\end{enumerate}
\end{defi}

\begin{ques}[{\cite[Question 3.4]{Xie24}}]
\label{Question 3.4}
\rm 
Does the  weak Oka-1 property imply the Oka-1 property?
\end{ques}

An affirmative answer, meaning that the Runge approximation property on discs (see Proposition~\ref{equivalence of definitions}) implies Oka-1, would provide a one-dimensional analogue of Forstneri\v{c}'s celebrated theorem~\cite{Fors06} that the Runge approximation property on convex sets implies the Oka property.

\medskip

\begin{conj}
[{\cite[Conjecture 9.1]{AlarconForstneric23}}]\rm 
Every rationally connected projective manifold is Oka-1.
\end{conj}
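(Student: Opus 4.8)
This conjecture is open; what follows is a strategy, modeled on Forstneri\v{c}'s proof that the convex approximation property implies the Oka property and adapted to the one-dimensional (Oka-$1$) setting of Definition~\ref{def:Oka1}, with rational curves supplying the flexibility of the target. First I would isolate a ``$\mathcal C^1$-approximation property for discs'': it should suffice that every holomorphic map $\varphi$ from a neighborhood of a closed disc $\overline{\mathbb D}\subset\mathbb C$ into $X$ be $\mathcal C^0$-approximable on a slightly smaller concentric disc by holomorphic maps defined near $\overline{\mathbb D}$ that match a prescribed finite jet of $\varphi$ at the center and remain within the homotopy class rel that jet. Granting this, the Oka-$1$ property for an arbitrary open Riemann surface $R$, Runge compact $K$, and discrete jet data $\{(a_i,k_i)\}$ follows by the standard exhaustion machinery of open-Riemann-surface Oka theory: fix a normal exhaustion $K=K_0\Subset K_1\Subset\cdots$ of $R$ by Runge compacts eventually engulfing each $a_i$, and run an induction that alternately extends the current holomorphic map across an elementary handle $K_m\cup B$ (with $B$ a small attached disc) via the local property and absorbs the jet condition at the $a_i$; summable errors yield a limit $F\colon R\to X$ which is holomorphic, approximates $f$ on $K$, matches the jets, and --- since every elementary modification is performed through a spray and is therefore homotopic to its predecessor rel the part already fixed --- is homotopic to $f$. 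The topology of $R$ and the bookkeeping preventing re-pollution on earlier $K_m$ are handled exactly as by \textbf{Trick 1} in the present paper.

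\emph{The flexibility input.} The local statement is where rational connectedness enters. Over $\mathbb C$, a smooth projective rationally connected $X$ carries, through every point --- and, by deforming and attaching combs of rational curves and smoothing, through every finite subset with prescribed jets at the marked points --- a \emph{very free} rational curve $g\colon\mathbb P^1\to X$, i.e.\ one with $g^*T_X$ ample (Koll\'ar's theorem; see Koll\'ar, \emph{Rational curves on algebraic varieties}). For the local step, take $p=\varphi(0)$ and choose a very free $g$, of sufficiently large degree, with $g(0_{\mathbb P^1})=p$ realizing the prescribed jet of $\varphi$ at $p$; then $H^1(\mathbb P^1,\,g^*T_X(-\,\text{jet locus}))=0$, so the space of deformations of $g$ fixing those jets has evaluation at $0_{\mathbb P^1}$ surjecting onto $T_pX$, and restricting such a family to a disc $\mathbb D\subset\mathbb P^1$ through $0$ gives a \emph{dominating holomorphic spray of discs through $p$}. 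Splicing this spray with $\varphi$ near $0$ (where it coincides with $\varphi$ by construction) and then approximating the spliced, a priori only $\mathcal C^1$, map by a genuinely holomorphic one on a neighborhood of $\overline{\mathbb D}$ via the Mergelyan theorem for manifold-valued maps on admissible sets (Theorem~\ref{thm:mergelyan-manifold-valued}) produces the required local approximation, with jets preserved and within the homotopy class (the spray parameter is the homotopy). Carrying the elementary-handle step along the \emph{entire} image of the map being modified, rather than at a single point, is then done by the standard gluing-of-sprays technique, using that very free curves exist through every point of $X$.

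\emph{Main obstacle.} The serious difficulty is precisely this passage from a pointwise, generic-looking flexibility --- a very free rational curve with ample normal bundle through each point, with its local deformation spray --- to a construction uniform and global enough along an \emph{arbitrary} given map $f\colon R\to X$, including where $f(R)$ meets loci on which the deformation theory of rational curves degenerates, and compatible with $f$ being already holomorphic near $K$. In higher-dimensional Oka theory this compatibility is exactly what forces Forstneri\v{c}'s delicate gluing of sprays over Cartan pairs; adapting it to the one-dimensional setting while simultaneously maintaining the homotopy class across all gluings and through the limit, and meeting all jet-interpolation conditions, is the crux --- and is presumably why the conjecture has resisted proof despite the known partial results for the Oka, Oka-$1$, rationally simply connected, and weak Oka-$1$ classes cited above.
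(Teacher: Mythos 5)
The statement you were asked to prove is not a theorem of this paper: it is \cite[Conjecture 9.1]{AlarconForstneric23}, quoted in Section~\ref{section:4} among open problems, and the paper offers no proof --- only a pointer to supporting evidence in \cite{CW23} and a remark that even establishing the weaker (weak) Oka-1 property for Fano manifolds by purely analytic means would already be significant. You correctly recognize this and explicitly present a research strategy rather than a proof, which is the honest and appropriate response.

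As a strategy sketch, what you write is sensible and broadly aligned with how the problem is discussed in the Oka literature: use very free rational curves (Koll\'ar's theory, including jet interpolation via combs and smoothing) as the source of dominating sprays of discs, splice with the given map via Mergelyan-type approximation on admissible sets, and run an exhaustion argument over the open Riemann surface. You also correctly identify the real obstruction --- converting pointwise existence of very free curves into a gluing of sprays that is uniform along an arbitrary given map, compatible with the portion already holomorphic near~$K$, and homotopy-controlled through all modifications and the limit. Two small cautions worth noting if you pursue this: (a) the paper's \textbf{Trick 1} concerns preserving quantitative length/area and integral estimates under $\mathcal{C}^0$-perturbation for the Ahlfors-current construction, and is not really the bookkeeping device you need for jet interpolation in an Oka-1 exhaustion; the relevant machinery is the Cartan-pair/spray gluing apparatus of \cite{Forstneric17}, which the paper does not develop. (b) Your proposed local ``$\mathcal{C}^1$-approximation property for discs with jets'' would, even if established, still require a nontrivial reduction to the full Oka-1 property over general open Riemann surfaces (controlling homotopy class and the discrete interpolation set simultaneously); that reduction is itself open, as the paper's own Question~\ref{Question 3.4} (whether weak Oka-1 implies Oka-1) indicates. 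None of this is a flaw in your write-up --- you flag the gap yourself --- but it confirms that what you have is a plausible program, not a proof, which matches the paper's own assessment of the conjecture's status.
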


Supporting evidence can be found in~\cite{CW23}.  
This conjecture is particularly intriguing from a complex-analytic perspective. The rational connectedness of Fano manifolds, established by Campana \cite{Cam92} and by Koll\'ar--Miyaoka--Mori \cite{KMM92}, relies on Mori's celebrated ``bend-and-break'' method \cite{MR554387}—a technique that uses characteristic $p$ arguments.  Establishing the (weak) Oka-1 property for such manifolds would  represent significant progress toward the long-standing challenge of  constructing  rational curves on Fano manifolds via purely analytic method.

\section*{\bf Acknowledgments}
Song-Yan Xie is profoundly grateful to the late Professor Nessim Sibony for sharing his conjecture  during a correspondence in January 2021. He
wishes to thank Franc Forstnerič for his inspiring online lectures on Oka manifolds in November 2022 and for insightful discussions on Question~\ref{Question 3.4}.
He also thanks Viet-Anh Nguyen for the invitation and hospitality during his visit to Lille University in September 2025.
We thank Gaofeng Huang, Yi C. Huang, Shinan Liu, Viet-Anh
Nguyen and Guangyuan Zhang for their valuable critiques  on an earlier version of this manuscript.


\section*{\bf Funding}

Song-Yan Xie is partially supported by the National Key R\&D Program of China under Grants  No. 2021YFA1003100 and No. 2023YFA1010500, as well as by the National Natural Science Foundation of China under Grants No. 12288201 and No. 12471081.

\medskip


\begin{thebibliography}{99}

\bibitem[Ahl35]{Ahlfors35}
Ahlfors, L. V.
\newblock Zur Theorie der Überlagerungsflächen.
\newblock \emph{Acta Mathematica} \textbf{65}, 157--194 (1935).

\bibitem[AF21]{AlarconForstneric21}
Alarc\'on, A., Forstneri\v{c}, F. and L\'opez, F.~J.
\newblock Minimal surfaces from a complex analytic viewpoint.
\newblock \emph{Springer Monographs in Mathematics}. Springer, Cham, xiii+430 (2021).

\bibitem[AF25]{AlarconForstneric23}
Alarc\'on, A.; Forstneri\v{c}, F.
\newblock Oka-1 manifolds.
\newblock \emph{Math. Z.}. \textbf{311(33)}, 1--34 (2025).

\bibitem[BW25]{BW25}
Benoist, O.; Wittenberg, O.
\newblock Tight approximation for rationally simply connected varieties. 
\newblock \emph{Taiwanese J.Math. Advance Publication}, 1–19 (2025). 

\bibitem[Bir29]{Birkhoff29}
Birkhoff, G. D.
\newblock D\'emonstration d'un th\'eor\`eme \'el\'ementaire sur les fonctions enti\`eres.
\newblock \emph{C.R. Acad. Sci. Paris}, \textbf{189}, 473--475 (1929).






\bibitem[Cam92]{Cam92}
Campana, F.
\newblock Connexit\'e rationnelle des vari\'et\'es de Fano
\newblock \emph{Ann. Sci. \'Ecole Norm. Sup. (4)} \textbf{25(5)}, 539--545 (1992).

\bibitem[CW23]{CW23}
Campana, F.; Winkelmann, J.
\newblock Dense entire curves in rationally connected manifolds (with an appendix by J\'anos Koll\'ar) 
\newblock \emph{Alg. Geom.} \textbf{10(5)}, 521–-553 (2023).

\bibitem[CHX23]{ChenHuynhXie23}
Chen, Z.; Huynh, D. T.; Xie, S.-Y.
\newblock Universal Entire Curves in Projective Spaces with Slow Growth.
\newblock \emph{J. Geom. Anal.} \textbf{33}, 308 (2023).

\bibitem[DS18]{DinhSibony18}
Dinh, T.-C.; Sibony, N.
\newblock Unique ergodicity for foliations in $\mathbb{P}^2$ with an invariant curve.
\newblock \emph{Invent. Math.} \textbf{211}(1), 1--38 (2018).

\bibitem[DS20]{DinhSibony20}
Dinh, T.-C.; Sibony, N.
\newblock Some open problems on holomorphic foliation theory.
\newblock \emph{Acta Math. Vietnam.} \textbf{45}(1), 103--112 (2020).

\bibitem[DV20]{DinhVu20}
Dinh, T.-C.; Vu, D.-V.
\newblock Algebraic flows on commutative complex Lie groups.
\newblock \emph{Comment. Math. Helv.} \textbf{95}(3), 421--460 (2020).

\bibitem[Duj22]{Duj22}
Dujardin, R.
\newblock Geometric methods in holomorphic dynamics.
\newblock \emph{International Congress of Mathematicians. EMS Press.}
3460--3482 (2022).




\bibitem[Duv08]{Duval08}
Duval, J.
\newblock Sur le lemme de Brody.
\newblock \emph{Invent. Math.} \textbf{173}(2), 305--314 (2008).


\bibitem[Duv21]{Duval21}
Duval, J.
\newblock Around Brody Lemma.
\newblock In \emph{Hyperbolicity properties of algebraic varieties}, vol. 56 of Panor. Synthèses, 1--12. Soc. Math. France, Paris (2021).

\bibitem[DH18]{DuvalHuynh18}
Duval, J.; Huynh, D. T.
\newblock A geometric second main theorem.
\newblock \emph{Math. Ann.} \textbf{370}(3-4), 1799--1804 (2018).


\bibitem[FFW20]{FornaessForstnericWold20}
Fornaess, J. E.; Forstnerič, F.; Wold, E. F.
\newblock Holomorphic approximation: the legacy of Weierstrass, Runge, Oka-Weil, and Mergelyan.
\newblock In: \emph{Advancements in Complex Analysis---From Theory to Practice}, 
Springer, Cham, 133--192 (2020).


\bibitem[FSt77a]{FornaessStout77a}
Fornaess, J. E.; Stout, E. L.
\newblock Spreading polydiscs on complex manifolds.
\newblock \emph{Amer. J. Math.} \textbf{99}(5), 933--960 (1977).

\bibitem[FSt77b]{FornaessStout77b}
Fornaess, J. E.; Stout, E. L.
\newblock Polydiscs in complex manifolds.
\newblock \emph{Math. Ann.} \textbf{227}, 145--153 (1977).

\bibitem[Fors17]{Forstneric17}
Forstnerič, F.
\newblock Stein manifolds and holomorphic mappings.
\newblock Ergeb. Math. Grenzgeb., vol. 56, Springer, Cham, xiv+562 pp. (2nd ed., 2017).

\bibitem[Fors06]{Fors06}
Forstnerič, F.
\newblock Runge approximation on convex sets implies the Oka property.
\newblock \emph{Annals of Mathematics.} \textbf{163}, 689–707 (2006).

\bibitem[FL25]{FL25}
Forstneri\v{c}, F. and L\'arusson, F.
\newblock Oka-1 manifolds: new examples and properties.
\newblock \emph{Mathematische Zeitschrift.} \textbf{309} (2), Paper No. 26, 16 (2025).



\bibitem[GX24]{GuoXie24}
Guo, B.; Xie, S.-Y.
\newblock Universal holomorphic maps with slow growth I. An Algorithm.
\newblock \emph{Math. Ann.} \textbf{389}(2), 1025--1058 (2024).

\bibitem[GX25]{GuoXie25}
Guo, B.; Xie, S.-Y.
\newblock Universal holomorphic maps with slow growth II: functional analysis methods.
\newblock Indiana Univ. Math. J. (to appear), available at \url{https://www.iumj.indiana.edu/IUMJ/forthcoming.php}.




\bibitem[HX21]{HuynhXie21}
Huynh, D.-T.; Xie, S.-Y.
\newblock Entire curves producing distinct Nevanlinna currents.
\newblock \emph{J. Math. Pures Appl.} \textbf{145}, 108--136 (2021).

\bibitem[HV21]{HuynhVu21}
Huynh, D. T.; Vu, D.-V.
\newblock On the set of divisors with zero geometric defect.
\newblock \emph{J. Reine Angew. Math.} \textbf{771}, 193--213 (2021).

\bibitem[KMM92]{KMM92}
Koll\'ar, J.; Miyaoka, Y.; Mori, S.
\newblock Rational connectedness and boundedness of Fano manifolds
\newblock \emph{J. Differential Geom.} \textbf{36(3)}, 765--779 (1992).



\bibitem[Lel57]{Lelong57}
Lelong, P.
\newblock Intégration sur un ensemble analytique complexe.
\newblock \emph{Bull. Soc. Math. France} \textbf{85}, 239--262 (1957).





\bibitem[McQ98]{McQuillan98} 
McQuillan, M.
\newblock Diophantine approximations and foliations.
\newblock \emph{Inst. Hautes Études Sci. Publ. Math.} No. 87, 121--174 (1998).

\bibitem[Mer52]{Mer52}
Mergelyan, S.~N.
\newblock Uniform approximations to functions of a complex variable.
\newblock \emph{Uspehi Mat. Nauk (N.S.)}, \textbf{7}, no.~2(48), 31--122 (1952).

\bibitem[Mor79]{MR554387}
Mori, S.
\newblock Projective manifolds with ample tangent bundles.
\newblock \emph{Annals of Mathematics.} \textbf{110(3)}, 593--606 (1979).




\bibitem[Nev25]{Nevanlinna25}
Nevanlinna, R.
\newblock Zur Theorie der meromorphen Funktionen.
\newblock \emph{Acta Mathematica.} \textbf{46}, 1--99 (1925).

\bibitem[NW14]{NW14}Noguchi, J. and Winkelmann, J.
\newblock \emph{Nevanlinna theory in several complex variables and Diophantine approximation}.
\newblock Grundlehren der Mathematischen Wissenschaften, 350.
\newblock Springer, Tokyo, 2014.

\bibitem[NO61]{NomizuOzeki1961}
Nomizu, K.; Ozeki, H.
\newblock On the Existence of Complete Riemannian Metrics.
\newblock \emph{Proceedings of the American Mathematical Society.} \textbf{12(6)}, 889–891 (1961).


\bibitem[Pom92]{Pom92}
Pommerenke, Ch.
\newblock \emph{Boundary Behaviour of Conformal Maps}.
\newblock Grundlehren der Mathematischen Wissenschaften, Vol. 299.
\newblock Springer-Verlag, Berlin, 1992.

\bibitem[Run85]{Run85}
Runge, C. 
\newblock Zur Theorie der Eindeutigen Analytischen Functionen. 
\newblock \emph{Acta Math.} \textbf{6(1)}, 229–244 (1885).


\bibitem[Ru21]{Ru21}
Ru, M.
\newblock Nevanlinna theory and its relation to Diophantine approximation.
\newblock World Scientific, Hackensack, NJ, xvi+426 pp. (2nd ed., 2021).





\bibitem[Whit34]{Whitney34} 
Whitney, H.
\newblock Analytic extensions of differentiable functions defined in closed sets.
\newblock \emph{Trans. Amer. Math. Soc.} \textbf{36}(1), 63--89 (1934).

\bibitem[WX25]{WuXie25} 
Wu, H.; Xie, S.-Y.
\newblock Entire curves generating all shapes of Nevanlinna currents.
\newblock \emph{J. London Math. Soc.} \textbf{111}, e70177 (2025).

\bibitem[Xie24]{Xie24} 
Xie, S.-Y.
\newblock Entire Curves Producing Distinct Nevanlinna Currents.
\newblock \emph{Int. Math. Res. Not.} \textbf{2024}(10), 8350--8362 (2024).

\end{thebibliography}
\end{document}